\tikzset{mybrace/.style={decorate,decoration={brace,amplitude=10pt,aspect=#1}}}
\setlist[enumerate,1]{font=\bfseries,label=\arabic*.}
\def\@bibdataout@init{}\def\pre@bibdata{}\makeatother
\colorlet{darkgreen}{green!50!black}
\colorlet{darkblue}{blue!70!black}
\colorlet{darkred}{red!70!black}
\newtheorem{theorem}{Theorem}[section]
\newtheorem{proposition}[theorem]{Proposition}
\newtheorem{lemma}[theorem]{Lemma}
\newtheorem{corollary}[theorem]{Corollary}
\theoremstyle{definition}
\theoremstyle{remark}
\newtheorem*{remark}{Remark}
\newcommand{\eg}{\emph{e.g.}}
\newcommand{\ie}{\emph{i.e.}}
\newcommand{\Ie}{\emph{I.e.}}
\newcommand{\FP}{\mathsf{FP}}
\newcommand{\shP}{\mathsf{\#P}}
\newcommand{\NP}{\mathsf{NP}}
\newcommand{\CSAT}{\mathsf{CSAT}}
\newcommand{\shCSAT}{\mathsf{\#CSAT}}
\newcommand{\ZSAT}{\mathsf{ZSAT}}
\newcommand{\shRSAT}{\mathsf{\#RSAT}}
\newcommand{\shZSAT}{\mathsf{\#ZSAT}}
\newcommand{\Alt}{\operatorname{Alt}}
\newcommand{\Aut}{\operatorname{Aut}}
\newcommand{\Inn}{\operatorname{Inn}}
\newcommand{\Rub}{\operatorname{Rub}}
\newcommand{\Sym}{\operatorname{Sym}}
\newcommand{\ab}{{\operatorname{ab}}}
\newcommand{\ev}{\operatorname{ev}}
\newcommand{\free}{{\operatorname{free}}}
\newcommand{\sch}{\operatorname{sch}}
\newcommand{\tor}{{\operatorname{tor}}}
\renewcommand{\wr}{\operatorname{wr}}
\newcommand{\onto}{\twoheadrightarrow}
\newcommand{\into}{\hookrightarrow}
\newcommand{\longto}{\longrightarrow}
\renewcommand{\setminus}{\smallsetminus}
\newcommand{\normaleq}{\unlhd}
\newcommand{\yes}{\mathrm{yes}}
\newcommand{\no}{\mathrm{no}}
\newcommand{\Z}{\mathbb{Z}}
\newcommand{\R}{\mathbb{R}}
\newcommand{\N}{\mathbb{N}}
\newcommand{\hG}{\hat{G}}
\newcommand{\hR}{\hat{R}}
\newcommand{\hU}{{\hat{U}}}
\newcommand{\tC}{\tilde{C}}
\newcommand{\tG}{\tilde{G}}
\newcommand{\Cor}[1]{Corollary~\ref{#1}}
\newcommand{\Fig}[1]{Figure~\ref{#1}}
\newcommand{\Lem}[1]{Lemma~\ref{#1}}
\newcommand{\Prop}[1]{Proposition~\ref{#1}}
\newcommand{\Sec}[1]{Section~\ref{#1}}
\newcommand{\Thm}[1]{Theorem~\ref{#1}}
\newcommand{\defeq}{\stackrel{\mathrm{def}}=}
\newenvironment{eq}[1]{\begin{equation} \label{#1}}
    {\end{equation}\ignorespacesafterend}
\begin{document}
\title{Coloring invariants of knots and links are often intractable}
\author{Greg Kuperberg}
\email{greg@math.ucdavis.edu}
\thanks{Partly supported by NSF grant CCF-1716990}
\affiliation{University of California, Davis}

\author{Eric Samperton}
\email{eric@math.ucsb.edu}
\thanks{Partly supported by NSF grant CCF-1716990}
\affiliation{University of California, Santa Barbara}

\date{\today}

\begin{abstract} Let $G$ be a nonabelian, simple group with a nontrivial
conjugacy class $C \subseteq G$.  Let $K$ be a diagram of an oriented knot
in $S^3$, thought of as computational input.  We show that for each such $G$
and $C$, the problem of counting homomorphisms $\pi_1(S^3\setminus K) \to G$
that send meridians of $K$ to $C$ is almost parsimoniously $\shP$-complete.
This work is a sequel to a previous result by the authors that counting
homomorphisms from fundamental groups of integer homology 3-spheres to
$G$ is almost parsimoniously $\shP$-complete.  Where we previously used
mapping class groups actions on closed, unmarked surfaces, we now use
braid group actions.
\end{abstract}

\maketitle

\section{Introduction}
\label{s:intro}

Let $K$ be an oriented knot in the 3-sphere $S^3$ described by some given
knot diagram.  Fox \cite[Exer.~VI.6-7]{CF:knot} popularized the idea of
a 3-coloring of the diagram $K$, which is now also called a Fox coloring
\cite{dHJ:graph}.  By definition, such a coloring is an assignment of one
of three colors to each arc in $K$ such that at every crossing, the over-arc
and the two other arcs are either all the same color or all different colors.
It is easy to check that the number of 3-colorings of a diagram is invariant
under Reidemeister moves, and is therefore an isotopy invariant of $K$.

Fox colorings are a special case of the following type of generalized
coloring based on the Wirtinger presentation of the knot group $\pi_1(S^3
\setminus K)$ \cite{Reidemeister:knoten}: Fix a finite group $G$
and a conjugacy class $C \subseteq G$ that generates $G$.  Then a
\emph{$C$-coloring} is an assignment of an element $c \in C$ to each
arc in $K$ such that at each crossing, one of the two relations as in
\Fig{f:relations} holds, depending on the sign of the crossing.  The set
of $C$-colorings is bijective with the set
\[ H(K;G,C) \defeq \{ f:\pi_1(S^3 \setminus K)
    \to G \mid f(\gamma) \in C \}\]
of homomorphisms from the knot group to $G$ that take a meridian $\gamma$
of $K$ to some element in $C$.  (Since the meridians are themselves a
conjugacy class of $\pi_1(S^3 \setminus K)$, it doesn't matter which one
we choose.)  Then $\#H(K;G,C)=|H(K;G,C)|$ is an important integer-valued
invariant of knots.

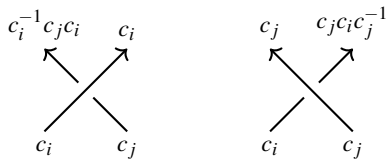
\begin{figure}[h]
\begin{tikzpicture}[thick]
\draw[->] (1.1,0) -- (0,1.1);
\draw[->, white,line width=8] (0,0) -- (1.1,1.1);
\draw[->] (0,0) -- (1.1,1.1);
\draw (0,0) node[below] {$c_i$};
\draw (1.1,0) node[below] {$c_j$};
\draw (1.1,1.1) node[above] {$c_i$};
\draw (0,1.1) node[above] {$c_i^{-1}c_jc_i$};
\begin{scope}[xshift=3cm]
\draw[->] (0,0) -- (1.1,1.1);
\draw[->, white,line width=8] (1.1,0) -- (0,1.1);
\draw[->] (1.1,0) -- (0,1.1);
\draw (0,0) node[below] {$c_i$};
\draw (1.1,0) node[below] {$c_j$};
\draw (1.1,1.1) node[above] {$c_jc_ic_j^{-1}$};
\draw (0,1.1) node[above] {$c_j$};
\end{scope} \end{tikzpicture} \hfill
\caption{The Wirtinger relations.}
\label{f:relations} \end{figure}

If $G = S_3$ is the symmetric group on 3-letters and $C$ is the conjugacy
class of transpositions, then $H(K;G,C)$ is precisely the set of Fox
colorings of $K$.  In this case, and in any case when $G$ is metabelian,
$H(K;G,C)$ is an abelian group (or more precisely a torsor over one)
that can be calculated efficiently using the Alexander polynomial of $K$.
However, Fox also considered the set $H(K;G,C)$ for general $G$ and $C$.
When $G = A_5$, he observed that ``$A_5$ is a simple group, so that I
know of no method of finding representations on $A_5$ other than just
trying" \cite{Fox:quick}.  Our main result, \Thm{th:main}, demonstrates
that Fox's frustration was prescient; see \Sec{ss:interp}.

To state our precise result, we first refine the invariants $H(K;G,C)$
and $\#H(K;G,C)$.  Let $\Aut(G,C)$ be the group of automorphisms of $G$
that take $C$ to itself.  Then $\Aut(G,C)$ acts on $\#H(K;G,C)$,
and in particular it acts freely on the surjective maps in $H(K;G,C)$.
Let
\[ Q(K;G,C) \defeq \{ f:\pi_1(S^3 \setminus K)
    \onto G \mid f(\gamma) \in C \}/\Aut(G,C) \]
be the corresponding quotient set.  Regardless of $K$, the set $H(K;G,C)$
always contains a unique homomorphism with cyclic image that sends $\gamma$
to each given $c \in C$.  If $G$ is not cyclic and if all other homomorphisms
are surjective, then in these cases
\begin{eq}{e:almost} \#H(K;G,C) = \#C + \#\Aut(G,C) \cdot \#Q(K;G,C). \end{eq}
Our main theorem implies that if $G$ is non-abelian simple, then $\#Q(K;G,C)
= \#Q(K,\gamma;G,c)$ is computationally intractable, and remains so even when
every homomorphism $f \in H(K;G,C)$ is promised either to be surjective
or have cyclic image.

\begin{theorem} Let $G$ be a fixed, finite, non-abelian simple group,
and fix a nontrivial conjugacy class $C \subseteq G$. If $K \subseteq S^3$
is an oriented knot specified by a knot diagram interpreted as
computational input, then the invariant $\#Q(K;G,C)$ is parsimoniously
$\shP$-complete.  The reduction also guarantees that $\#Q(K;J,E) = 0$
for any group $J$ generated by a conjugacy class $E$ with $\#E < \#C$,
except when $J$ is a cyclic group.
\label{th:main} \end{theorem}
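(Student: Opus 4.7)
The plan is to reduce parsimoniously from $\shCSAT$. Containment in $\shP$ is routine: a nondeterministic witness is a choice of element of $C$ for each arc of $K$ together with a certificate that it represents a canonical orbit representative for $\Aut(G,C)$; the Wirtinger relations of \Fig{f:relations}, surjectivity onto $G$, and the orbit representative condition can all be checked in polynomial time.

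For hardness, I would view the oriented knot $K$ as the closure of a braid $\beta \in B_m$, exploiting the fact that $C$-colorings of $\hat\beta$ are in bijection with the tuples $(g_1,\dots,g_m)\in C^m$ fixed by the Hurwitz action of $\beta$. The Hurwitz action of $B_m$ on $C^m$ is generated by $\sigma_i\cdot(\dots,g_i,g_{i+1},\dots)=(\dots,g_ig_{i+1}g_i^{-1},g_i,\dots)$, which matches the Wirtinger relation of \Fig{f:relations} exactly. Given a Boolean circuit $\phi$ with $n$ inputs, the reduction produces $\beta$ as a concatenation of elementary gadgets, each encoding either an input bit or a reversible logical gate (e.g.\ the Toffoli gate \CCNOT\ together with ancillas initialized to a fixed value).

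The main technical step is the construction of these gadgets. Two distinguished elements $c_0,c_1\in C$ play the role of Boolean $0$ and $1$ on each ``wire''. For each gate I need a braid word whose Hurwitz action permutes prescribed subsets of $\{c_0,c_1\}^k$ in a specified way while fixing every other strand pointwise. Existence of such words should follow from an analogue for braids of the Conway--Parker/Dunfield--Thurston ``rich orbit'' theorem: for a fixed non-abelian simple $G$ and nontrivial conjugacy class $C$, the Hurwitz action of $B_m$ on the set of generating tuples in $C^m$ with a prescribed product in $G$ is, for $m$ large enough, the full alternating group on each orbit allowed by the obvious product and Schur-multiplier invariants. This is precisely the braid-group replacement for the mapping-class-group richness result that drove the authors' earlier homology-sphere paper, and it is the step I expect to be the most delicate, because one must quote or adapt a version strong enough to isolate single orbits and to realize $3$-cycles on distinguished subsets.

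Assembly of the gadgets into a single braid $\beta$ whose closure is a \emph{knot} (not a link) and whose colorings parsimoniously enumerate satisfying assignments of $\phi$ is where the remaining obstacles cluster. First, $\hat\beta$ must be connected; this can be enforced by postcomposing with a ``round-robin'' braid that cycles the strands, tracking how this multiplies the count by an explicit constant that can be divided out. Second, to obtain strict parsimony for $\#Q$ (rather than the ``almost'' bookkeeping of equation~\eqref{e:almost}), I must arrange that every non-surjective $C$-coloring has cyclic image and that these contribute exactly $\#C$ to $\#H(K;G,C)$. Third, the final clause of \Thm{th:main} requires that the gadgets be rigid in the stronger sense that any $E$-coloring by a nontrivial conjugacy class $E$ with $\#E<\#C$ in some other group $J$ is forced to factor through a cyclic quotient; this is achieved by choosing gadget colorings that internally span $G$ on a fixed sub-braid, so that the ``product-of-meridians'' invariants of the braid preclude any smaller conjugacy class from realizing a consistent coloring.
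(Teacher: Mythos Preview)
Your outline has the right skeleton---braid gadgets justified by a Conway--Parker--type transitivity theorem---but two architectural choices diverge from the paper in ways that create real gaps.

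\textbf{Plat closure versus braid closure.} The paper builds $K(Z)$ as the \emph{plat} closure of a braid, not the trace closure you propose. With plat closure, the bottom plats impose an initialization condition $p\in I^n$, the braid $b(Z)$ carries $p$ to $q=Z(p)$, and the top plats impose the finalization condition $q\in F^n$; thus extensions of $p$ to $\pi_1(S^3\setminus K(Z))$ are exactly the $\shZSAT$ solutions. With trace closure, colorings are \emph{fixed points} of the Hurwitz action of $\beta$, and you never explain how to force those fixed points to coincide with satisfying assignments of a circuit. If $\beta$ implements a reversible function $f$, its Hurwitz fixed points are $x$ with $f(x)=x$, not $x$ with $f(x)=\text{yes}$. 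Your ``round-robin'' postcomposition to achieve connectedness makes this worse: it perturbs the fixed-point set in a way you do not control, and ``dividing out a constant'' is incompatible with the parsimony claimed for $\#Q$ in the theorem. The paper instead gets a knot for free because each gadget $b(\delta)$ is a \emph{pure} braid and the plat pattern is chosen so that the closure has one component.

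\textbf{What the alphabet really is.} You encode a bit by a single element of $C$ on one strand. The paper encodes a symbol by an entire surjection in $R_k^0\subset\hR_k$, i.e.\ by a bundle of $2k$ strands with first and last labels pinned to a fixed $c\in C$ and trivial Schur invariant. This is not cosmetic: the ultratransitivity theorem (\Thm{th:refine}) is about the $B_{k,k}$-action on $R_k^0$, and its proof needs the Schur invariant to cut down to a single orbit and the asymptotic $\#R_k^0\sim(\#C)^{2k}$ to show $\Alt(R_k^0/\hU)$ is too large to be a subquotient of any $\Sym(T(J,E))$ with $\#E<\#C$. That size comparison is exactly what forces the gadgets to act trivially on every $T(J,E)$, which is how the paper obtains the final clause of the theorem; your ``internally span $G$'' sketch does not supply an analogous mechanism. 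Similarly, ruling out spurious (non-surjective, non-cyclic) homomorphisms requires the careful filtration $\{(z,z)\}\subset A^2\subset\hU\cdot(A^2)\subset(\hU\cdot A)^2\subset(\hR_k^0)^2$ preserved by each gadget, together with the plat constraints at both ends; your proposal does not address this step.
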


We note that $J$ in the statement of the theorem is not necessarily a
subgroup of $G$, although the case that $J$ is a subgroup of $G$ generated
by a subset of $C$ is of particular interest.

Before reviewing the definition of $\shP$-completeness and interpreting
\Thm{th:main}, we expand on the relation between $H(K;G,C)$ and $Q(K;G,C)$.

Let $c \in C$ and let
\[ H(K,\gamma;G,c) \defeq \{ f:\pi_1(S^3 \setminus K) \to G \mid
    f(\gamma) = c \}. \]
It is easy to see (by conjugation in $G$) that $\#H(K,\gamma;G,c)$
is independent of the choice of $c$ and that
\[ \#H(K;G,C) = \#C \cdot \#H(K,\gamma;G,c). \]
Let $\Aut(G,c)$ be the group of automorphisms of $G$ that fix $c$.
Then $\Aut(G,c)$ acts on $H(K,\gamma;G,c)$, and in particular it acts
freely on the surjective maps in $H(K,\gamma;G,c)$.  Let
\[ Q(K,\gamma;G,c) \defeq \{ f:\pi_1(S^3 \setminus K) \onto G \mid
    f(\gamma) = c \}/\Aut(G,c) \]
be the corresponding quotient set.  Again by examining conjugation in $G$,
we learn that the natural map $Q(K,\gamma;G,c)$ to $Q(K;G,C)$ is a bijection.

Given that every $f \in H(K;G,c)$
has some image $J \ni c$, we obtain the summation formula
\[ \#H(K,\gamma;G,c) = \sum_{c \in J \leq G} \#\Aut(J,c)
    \cdot \#Q(K,\gamma;J,c).\]
Given that the conjugacy class of $\gamma$ generates $\pi(S^3 \setminus
K)$, the conjugacy class $E$ of $c$ in $J$ generates $J$ as well.  So we
can also write
\begin{eq}{e:sum} \#H(K;G,C) = \sum_{c \in E \subseteq J \leq G}
    \#C \cdot \#\Aut(J,E) \cdot \#Q(K;J,E).\end{eq}
Finally if $J \ne G$, then necessarily $\#E < \#C$.  Thus when the
conclusion of \Thm{th:main} holds, equation \eqref{e:sum} reduces to
equation \eqref{e:almost}.

\subsection{Interpretation and previous results}
\label{ss:interp}

For an introduction to the topic of computational complexity, see our
previous article \cite[Sec.~2.1]{K:zombies}, as well as Arora-Barak
\cite{AB:modern} and the Complexity Zoo \cite{W:zoo}.  Here we just give
a brief description the concept of $\shP$-completeness and parsimonious
reduction.

If $A$ is a finite alphabet and $A^*$ is the set of finite words in $A$,
a problem in $\shP$ is by definition a function $c:A^* \to \N$ given
by the equation
\[ c(x) = \#\{y \mid p(x,y) = \yes \}, \]
where length of the certificate $y$ is polynomial in the length of $x$, and
\[ p:A^* \times A^* \to \{\yes,\no\} \]
is a predicate that can be computed in polynomial time.  A counting
problem $c \in \shP$ is \emph{parsimoniously $\shP$-complete} when every
problem $b \in \shP$ can be converted to a special case of $c$. More
precisely, $c$ is parsimoniously $\shP$-complete when $b(x) = c(f(x))$
for some function $f:A^* \to A^*$ that can be computed in polynomial time.

The significance of parsimonious $\shP$-completeness for a counting problem
$c$ is that not only is the exact value of $c$ computationally intractable,
but also that obtaining any partial information about $c$ is computationally
intractable, assuming standard conjectures in complexity theory.  To give
a contrasting example, the number of perfect matchings $m(\Gamma)$ of a
finite, bipartite graph $\Gamma$ is well-known to be $\shP$-complete by
the looser standard of Turing-Cook reduction \cite{Valiant:permanent}.
The exact value of $m(\Gamma)$ is thus intractable.  However, the
parity of $m$ can be computed in polynomial time (as a determinant
over $\Z/2$), whether $m(\Gamma) = 0$ can be computed in polynomial time
\cite{Munkres:assignment}, and $m(\Gamma)$ can be approximated in randomized
polynomial time \cite{JSV:approx}.  Barring a catastrophe in computer
science, no such partial results are possible for computing $\#Q(K;G,C)$
under the hypotheses of \Thm{th:main}, not even with the aid of a quantum
computer \cite{BBBV:strengths}.

The analogous concepts for existence questions are the complexity class
$\NP$ and the $\NP$-completeness property.   A decision function $d:A^*
\to \{\yes,\no\}$ is in $\NP$ if there is a polynomial-time predicate $p$
such that $d(x) = \yes$ if and only if $p(x,y) = \yes$.  The function $d$
is \emph{Post-Karp $\NP$-complete} if for every $e \in \NP$, $e(x) = d(f(x))$
for some $f$ computable in polynomial time.

In particular, \Thm{th:main} implies $\NP$-completeness results for the
existence of $C$-colorings.  De Mesmay, Rieck, Sedgiwck, and Tancer
\cite{dMRST:unbearable} cite us for the first known $\NP$-hardness
result for knots in $S^3$ (as opposed to knots in general $3$-manifolds
\cite{AHT:genus}), so we record this as an explicit corollary.

\begin{corollary} Let $G$ be a fixed, finite, non-abelian simple group,
and fix a nontrivial conjugacy class $C \subseteq G$.  If $K \subseteq S^3$
is an oriented knot specified by a knot diagram interpreted as computational
input, then deciding whether $Q(K;G,C)$ is non-empty or $\#H(K,G,C) > \#C$
is $\NP$-complete via Post-Karp reduction.
\label{c:main} \end{corollary}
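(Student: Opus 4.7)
The plan is to derive both $\NP$-completeness statements directly from \Thm{th:main}; beyond invoking that theorem, the argument is essentially packaging, so I would keep it short.

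First, I would verify membership in $\NP$. For ``$Q(K;G,C) \neq \emptyset$'' a natural certificate is a surjective homomorphism $f : \pi_1(S^3 \setminus K) \onto G$ sending meridians into $C$, specified by listing the image in $G$ of each Wirtinger arc. Since $G$ is a fixed finite group, verifying the Wirtinger relations at every crossing and checking surjectivity of $f$ both run in polynomial time in the size of the diagram. Similarly, a certificate for ``$\#H(K;G,C) > \#C$'' is any homomorphism in $H(K;G,C)$ whose image is non-cyclic, again checkable in polynomial time.

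Second, to establish Post-Karp hardness I would run the parsimonious polynomial-time reduction guaranteed by \Thm{th:main}. Starting from any Post-Karp $\NP$-complete problem, say $\SAT$, the reduction produces a knot diagram $K$ with $\#Q(K;G,C)$ equal to the number of satisfying assignments of the input formula. In particular, the formula is satisfiable if and only if $Q(K;G,C)$ is non-empty, which is the first $\NP$-completeness claim.

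To obtain the analogous statement for ``$\#H(K;G,C) > \#C$'' I would invoke the promise clause of \Thm{th:main}: if some $f \in H(K;G,C)$ had a proper non-cyclic image $J$ and $E$ denoted the conjugacy class of $f(\gamma)$ in $J$, then $\#E < \#C$, so the promise forces $\#Q(K;J,E) = 0$, ruling out the existence of such an $f$. Hence every $f \in H(K;G,C)$ is either cyclic or surjective onto $G$, so equation~\eqref{e:almost} applies and $\#H(K;G,C) > \#C$ is equivalent to $\#Q(K;G,C) > 0$. The same reduction therefore witnesses the second $\NP$-completeness result. The only point requiring care is exactly this use of the promise to exclude intermediate non-cyclic images; there is no genuine technical obstacle, since all the real work has already been done in \Thm{th:main}.
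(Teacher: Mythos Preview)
Your proposal is correct and matches the paper's own treatment: the paper does not supply a separate proof of this corollary, but simply records it as a direct consequence of \Thm{th:main} together with equation~\eqref{e:almost} and the promise clause, exactly as you argue. Your explicit handling of $\NP$ membership and of the exclusion of intermediate non-cyclic images via the promise is the intended reasoning.
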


Note also that much more is true thanks to a result of Valiant and
Vazirani \cite{VV:unique}: Distinguishing \emph{any} two values of a
parsimoniously $\shP$-hard problem is $\NP$-hard with randomized reduction
\cite[Thm.~2.1]{K:zombies}.

Some partial information about the unadjusted counting invariant $\#H(K;G,C)$
can be computed efficiently; for instance, that it always at least $\#C$.
However, \Thm{th:main} and equation~\eqref{e:almost} together imply that
this extra information can be trivial.  We call a counting problem $c
\in \shP$ \emph{almost parsimoniously $\shP$-complete} if for every $b
\in \shP$, there is a reduction $\alpha b(x) + \beta = c(f(x)))$ for some
universal constants $\alpha > 0$ and $\beta \ge 0$.  Almost parsimonious
reductions arise naturally in computational complexity.  For example, the
number of 3-colorings of a planar graph with at least one edge is always
divisible by 6; but after dividing by 6, this number becomes parsimoniously
$\shP$-complete \cite{Barbanchon:unique}.  Likewise, \Thm{th:main}
shows $\#H(K;G,C)$ is almost parsimoniously $\shP$-complete.

The strongest partial result toward \Thm{th:main} to our knowledge is
that of Krovi and Russell.  Taking the straightforward generalization of
$H(K;G,C)$ to links $L$, they showed that $\#H(L;A_m,C)$ is $\shP$-complete
for any fixed $m \ge 5$ and any fixed conjugacy class $C$ of permutations
with at least 4 fixed points \cite{KR:finite}.  Their reduction is not
almost parsimonious because it has an error term.  In particular, they do
not obtain that it is $\NP$-complete to determine whether $\#H(L;A_m,C)
> \#C$ or $\#Q(L;A_m,C) > 0$.

\subsection{Outline of the proof}
\label{ss:outline}

Our proof of \Thm{th:main} follows our proof of the analogous theorem for
homology 3-spheres \cite{K:zombies}, which we assume as a prerequisite for
this article.  However, \Thm{th:main} is a stronger result because knots
are a more restricted class of topological objects.  As a preliminary
observation, both $\#H(K;G,C)$ and $\#Q(K;G,C)$ are in $\shP$ by the same
argument as in the 3-manifold case \cite[Thm.~2.7]{K:zombies}.

The reduction begins with a counting version of circuit satisfiability,
$\shCSAT$, that is rather directly parsimoniously $\shP$-complete
\cite[Thm.~2.2]{K:zombies}.  The $\shCSAT$ problem can be reduced to a
certain version with reversible circuits, $\shRSAT$, and we can assume in
both problems that circuits are planar.  Whereas the output to a $\CSAT$
circuit is constrained to $\yes$ and the input is any satisfying certificate,
both the input and output of a $\shRSAT$ circuit are partially constrained.
In turn, $\shRSAT$ reduces almost parsimoniously to an ad hoc reversible
circuit problem called $\shZSAT$ where:  (1) The alphabet is a $U$-set
for some finite group $U$ with a single fixed point called the ``zombie"
symbol and otherwise free orbits, and (2) the gates are $U$-equivariant
permutations.  Finally $\shZSAT$ reduces to $\#Q(K;G,C)$ in a construction in
which the circuit becomes a braid word and suitable initialization and
finalization conditions are expressed by plat closure.

Let $D^2 \setminus [n]$ denote a disk with $n$ punctures.  The reduction
from $\shZSAT$ to $\#Q(K;G,C)$ involves a braid group action on the
set of surjections
\[ f:\pi_1(D^2 \setminus [2k]) \onto G \]
with clockwise monodromy in $C$ at $k$ punctures, counterclockwise monodromy
in $C$ at the other $k$ punctures, and trivial monodromy on the outside.
In \Thm{th:refine}, we show that when $k$ is large enough, this braid
group action is very highly transitive modulo a certain Schur invariant.
High transitivity makes it possible to implement gates in a precise way that
preserves enumeration and does not disturb non-surjective homomorphisms.
\Thm{th:refine} in turn requires two types of group-theoretic ingredients.
The first ingredient, \Thm{th:evw}, is a refinement of the Conway-Parker
theorem \cite{FV:inverse} that shows that the action is at least
transitive when $G$ is any finite group.  This refinement first appeared in
version 1 of a retracted e-print of Ellenberg, Venkatesh, and Westerland
\cite[Thm.~7.6.1]{EVW:hurwitz2}; the second author of this article later
found a topological proof \cite[Thm.~1.1]{Samperton:schur}.  The second
ingredient is a set of surjectivity results for group homomorphisms
(\Sec{s:group}).

\section{Group theory}
\label{s:group}

In this section, we simply list some surjectivity results in group theory
that we will need to prove \Thm{th:refine}.

\subsection{Surjectivity for products}

The following lemma is a mutual corollary of Goursat's Lemma
\cite{Goursat:divisions} and Ribet's Lemma \cite{Ribet:division,Ribet:adic}.
In our research, we first saw it stated by Dunfield and Thurston
\cite[Lem.~3.7]{DT:random}.

\begin{lemma}[After Goursat-Ribet
    {\cite[Lems.~5.2.1~\&~5.2.2]{Ribet:division}}] If
\[ f:J \to G_1 \times G_2 \times \dots \times G_n \]
is a homomorphism from a group $J$ to a product of non-abelian simple groups
that surjects onto each factor, and if no two factor homomorphisms $f_i:
J \onto G_i$ and $f_j:J \onto G_j$ are equivalent by an isomorphism $G_i
\cong G_j$, then $f$ is surjective.
\label{l:goursrib} \end{lemma}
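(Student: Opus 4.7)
I will prove the lemma by induction on $n$. The base case $n=1$ is immediate. For the inductive step, suppose the result holds for $n-1$ factors, and let $A := G_1 \times \cdots \times G_{n-1}$. Let $f' : J \to A$ denote the composition of $f$ with the projection away from $G_n$. The hypothesis that the coordinate homomorphisms $f_1, \ldots, f_{n-1}$ are each surjective and pairwise inequivalent is inherited verbatim by $f'$, so the inductive hypothesis gives that $f'$ is surjective.

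The heart of the argument is then to upgrade the surjectivity of $f'$ and of $f_n$ to the surjectivity of $f$ itself onto $A \times G_n$. I would apply Goursat's lemma to $H := \im f \subseteq A \times G_n$. Since both coordinate projections of $H$ are surjective, Goursat's classification reduces $H$ to a single additional piece of data: an isomorphism $\phi : A/N \to G_n/N_n$ for some normal subgroups $N \normaleq A$ and $N_n \normaleq G_n$. Moreover, $H = A \times G_n$ precisely when $N = A$ and $N_n = G_n$.

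Suppose for contradiction that $H$ is proper, so that $\phi$ is a nontrivial isomorphism of nontrivial quotients. Simplicity of $G_n$ forces $N_n = \{e\}$, and hence $G_n/N_n = G_n$. On the other hand, every normal subgroup of the semisimple product $A$ is of the form $\prod_{i \in S} G_i$ for some $S \subseteq \{1, \ldots, n-1\}$, so $A/N \cong \prod_{i \notin S} G_i$. For this quotient to be isomorphic to the simple group $G_n$, exactly one index $i$ must lie outside $S$, yielding a canonical identification $A/N \cong G_i$. Chasing through the Goursat description of $H$ then identifies $f_n$ with $\phi \circ f_i$, contradicting the hypothesis that no two coordinate homomorphisms are equivalent by an isomorphism. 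The only real obstacle in this argument is recalling the classification of normal subgroups of a product of non-abelian simple groups; once that is in hand, the rest is bookkeeping with the Goursat correspondence.
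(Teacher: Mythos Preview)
The paper does not actually supply a proof of this lemma; it merely states the result with a citation to Ribet and adds a remark on its provenance. Your argument is correct and is essentially the standard proof: induct on the number of factors, apply Goursat's lemma to the image inside $A \times G_n$, and use simplicity together with the classification of normal subgroups of a semisimple product to force the Goursat isomorphism to identify $f_n$ with some $\phi \circ f_i$, contradicting the inequivalence hypothesis. The one step you flag as the ``only real obstacle''---that every normal subgroup of $G_1 \times \cdots \times G_{n-1}$ is a sub-product $\prod_{i \in S} G_i$---is indeed the key input beyond Goursat, and it follows from a short commutator argument using that each $G_i$ has trivial center.
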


\begin{remark} Results similar to \Lem{l:goursrib} have appeared many times
in the literature with various attributions and extra hypotheses.  Both Ribet
and Dunfield-Thurston assume that the target groups are finite even though
their proofs do not use this hypothesis.  Dunfield-Thurston also state that
the result is due to Hall \cite{Hall:eulerian}.   However, all that we can
find in this citation is an unproven lemma (in his Section 1.6) that can
(with a little work) be restated as a special case of \Lem{l:goursrib}.
\end{remark}

Say that group $G$ is \emph{Zornian} if every proper normal subgroup of
$G$ is contained in a maximal normal subgroup.   Clearly every finite
group is Zornian, which is the case that we will need; more generally
every finitely generated group is Zornian \cite[Sec.~3.2]{K:zombies}.
The following is also an adaptation of Goursat's Lemma.

\begin{lemma}[After Goursat {\cite[Lem.~3.6]{K:zombies}}] Suppose that
\[ f:J \to G_1 \times G_2 \]
is a group homomorphism that surjects onto $G_1$, and suppose that $G_1$
is Zornian.  If no simple quotient of $G_1$ is involved in $G_2$, then
$f(B)$ contains $G_1$.
\label{l:zorn} \end{lemma}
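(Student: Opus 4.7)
The plan is to apply Goursat's lemma to the image $H \defeq f(J) \leq G_1 \times G_2$ and then use the Zornian hypothesis to upgrade any nontrivial shared quotient of $G_1$ and a subgroup of $G_2$ into a shared \emph{simple} quotient, which would contradict the assumption that no simple quotient of $G_1$ is involved in $G_2$.

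First I set $H_2 \defeq p_2(H) \leq G_2$, where $p_2$ is projection onto the second factor. By hypothesis $H$ surjects onto $G_1$, and by construction it surjects onto $H_2$. Goursat's lemma then produces normal subgroups $N_1 \normaleq G_1$ and $N_2 \normaleq H_2$, together with an isomorphism $\varphi : G_1/N_1 \xrightarrow{\sim} H_2/N_2$, such that
\[ H = \{(g_1, g_2) \in G_1 \times H_2 \mid \varphi(g_1 N_1) = g_2 N_2\}. \]
From this description, $H \cap (G_1 \times \{1\}) = N_1 \times \{1\}$, so the desired conclusion that $f(J)$ contains the $G_1$ factor $G_1 \times \{1\}$ is equivalent to the single assertion $N_1 = G_1$.

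Now I suppose for contradiction that $N_1$ is a proper normal subgroup of $G_1$. Since $G_1$ is Zornian, some maximal normal subgroup $M_1 \normal G_1$ contains $N_1$, and $S \defeq G_1/M_1$ is a nontrivial simple quotient of $G_1$. Because $M_1 \supseteq N_1$, the group $S$ is a quotient of $G_1/N_1$, hence via $\varphi$ a quotient of $H_2/N_2$, and therefore a quotient of $H_2$, which is a subgroup of $G_2$. This exhibits $S$ as a simple quotient of $G_1$ that is involved in $G_2$, contradicting the hypothesis. Thus $N_1 = G_1$, as required.

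The only subtlety I anticipate is setting up Goursat's lemma in the asymmetric form appropriate here, namely applied to a subgroup of $G_1 \times G_2$ whose image in $G_2$ is \emph{a priori} only some subgroup $H_2$ rather than all of $G_2$; once that is in place, the Zornian hypothesis does the rest of the work in a single line by promoting any nontrivial quotient of $G_1$ to a nontrivial simple one.
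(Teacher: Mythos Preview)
Your proof is correct. The paper does not prove this lemma in-line; it simply cites the authors' earlier paper \cite[Lem.~3.6]{K:zombies}, and the attribution ``After Goursat'' indicates that the intended argument is exactly the one you gave: apply Goursat's lemma to $H=f(J)\le G_1\times H_2$ to obtain $G_1/N_1\cong H_2/N_2$, then use the Zornian hypothesis to pass from a proper $N_1$ to a simple quotient of $G_1$ that is visibly involved in $G_2$.
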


Finally we will need the following two related lemmas.

\begin{lemma}[Ribet {\cite[Sec.~5.2]{Ribet:division}}] If
\[ N \normaleq G = G_1 \times G_2 \times \cdots \times G_n \]
is a normal subgroup of a product of perfect groups that
surjects onto each factor $G_i$, then $N = G$.
\label{l:ribetn} \end{lemma}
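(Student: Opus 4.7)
The plan is to induct on $n$ using Goursat's lemma. The base case $n = 1$ is immediate, since a normal subgroup of $G_1$ that surjects onto $G_1$ must be all of $G_1$.

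For the inductive step, I would split off the first factor, writing $G = G_1 \times H$ where $H = G_2 \times \cdots \times G_n$. The projection $\pi_H(N)$ is a normal subgroup of $H$ that surjects onto each $G_i$ for $i \geq 2$, so by the inductive hypothesis $\pi_H(N) = H$. Thus $N$ projects onto both factors of the two-factor product $G_1 \times H$, and Goursat's lemma associates to $N$ a pair of normal subgroups $A_1 \normaleq G_1$ and $B_1 \normaleq H$ together with an isomorphism $\phi \colon G_1/A_1 \to H/B_1$, with the description $N = \{(a,b) \in G_1 \times H : \phi(aA_1) = bB_1\}$.

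The crux is to exploit normality of $N$ inside $G_1 \times H$. Conjugating $(a, b) \in N$ by $(g, 1)$ forces the identity $\phi(gag^{-1}A_1) = \phi(aA_1)$ for every $g, a \in G_1$, which means $[G_1, G_1] \leq A_1$. Since $G_1$ is perfect, this forces $A_1 = G_1$, so the quotient $G_1/A_1$ is trivial; hence $B_1 = H$ as well, and $N = G_1 \times H = G$. The main obstacle (such as it is) is merely cleanly extracting the implication of normality via Goursat; the perfectness hypothesis is exactly what kills the Goursat quotients, and the symmetric argument using conjugation by $(1, h)$ on the other factor could be used interchangeably.
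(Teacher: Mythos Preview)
Your proof is correct. The paper itself does not prove this lemma; it simply states it with a citation to Ribet, so there is no in-paper argument to compare against.

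That said, your route via induction and Goursat is slightly more elaborate than necessary. A more direct argument, in the spirit of Ribet's original, avoids both: for each $i$, the commutator subgroup $[N,G_i]$ lies in $N$ (since $N$ is normal) and in $G_i$ (since $G_i$ is normal in $G$); projecting to the $i$th factor, $\pi_i([N,G_i]) = [\pi_i(N),G_i] = [G_i,G_i] = G_i$ by perfectness, and since $\pi_i$ is the identity on $G_i$ this forces $G_i \le N$ for every $i$, hence $N = G$. Your approach has the virtue of making the role of Goursat explicit (and fits the paper's surrounding narrative, which invokes Goursat repeatedly), while the commutator argument is shorter and handles all factors at once without induction.
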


\begin{lemma}[{\cite[Lem.~3.7]{K:zombies}}] If
\[ f: G_1 \times G_2 \times \cdots \times G_n \onto J \]
is a surjective homomorphism from a direct product of groups to a nonabelian
simple quotient $J$, then it factors through a quotient map $f_i :G_i
\onto J$ for a single value of $i$.
\label{l:prodquo} \end{lemma}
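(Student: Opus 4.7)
The plan is to exploit the fact that the factors of a direct product commute pairwise and are themselves normal subgroups. Identify each $G_i$ with its image under the inclusion $G_i \into G = G_1 \times \cdots \times G_n$; then $G_i$ is normal in $G$, so $f(G_i)$ is normal in $f(G) = J$. Since $J$ is simple, each $f(G_i)$ is either trivial or all of $J$.

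Next I would rule out the possibility that two different factors both map onto $J$. Suppose, for contradiction, that $f(G_i) = f(G_j) = J$ for some $i \ne j$. Since $G_i$ and $G_j$ commute elementwise in $G$, their images commute elementwise in $J$. Thus $J$ would centralize itself, meaning $J = Z(J)$, contradicting the hypothesis that $J$ is nonabelian. So at most one index $i$ satisfies $f(G_i) = J$, and $f(G_j)$ is trivial for all other $j$.

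To conclude, I would use surjectivity of $f$. Since $G$ is the (internal) product of the commuting subgroups $G_1, \ldots, G_n$, we have $J = f(G) = f(G_1)\cdots f(G_n)$. Combined with the previous paragraph, this forces exactly one index $i$ to satisfy $f(G_i) = J$ while $f(G_j) = \{1\}$ for all $j \ne i$. Equivalently, $f$ vanishes on the kernel of the projection $G \onto G_i$, so $f$ factors through this projection as a surjective quotient map $f_i : G_i \onto J$, as claimed.

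There is no real obstacle here: the argument is purely a direct-product calculation combined with simplicity and nonabelianness of $J$, and it does not require any of the heavier machinery (Goursat, Ribet, Zorn) invoked in the surrounding lemmas. The only subtlety worth stating carefully is the distinction between $G_i$ as an abstract factor versus $G_i$ as the normal subgroup $\{1\} \times \cdots \times G_i \times \cdots \times \{1\} \normaleq G$, since it is the latter viewpoint that makes $f(G_i)$ normal in $J$.
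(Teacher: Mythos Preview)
Your argument is correct and complete. The paper does not actually prove this lemma; it merely quotes the result from the authors' earlier work \cite[Lem.~3.7]{K:zombies}, so there is no in-paper proof to compare against. Your approach---using that each factor is normal so its image is normal in the simple group $J$, then using pairwise commutativity of distinct factors to rule out two surjective images---is the standard elementary argument and is exactly the sort of proof one would expect behind the citation.
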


\subsection{Rubik groups}

Let $G$ be a group and let $X$ be a $G$-set with finitely many orbits.
We denote the group of $G$-set automorphisms of $X$ by $\Sym_G(X)$.
We define the \emph{Rubik group} to be the commutator subgroup
\[ \Rub_G(X) \defeq [\Sym_G(X), \Sym_G(X)]. \]
Note that the natural map $\Sym_G(X) \to \Sym(X/G)$ takes $\Rub_G(X)$
to $\Alt(X/G)$.

When $X$ is a free $G$-set with $\#(G/X) = n$, $\Sym_G(X)$ is isomorphic to
the restricted wreath product
\[ \Sym(n,G) \defeq G \wr_m \Sym(n) = G^{\times n} \rtimes \Sym(n). \]
Likewise let
\[ \Rub(n,G) \defeq [\Sym(n,G),\Sym(n,G)].\]
We need two results about Rubik groups from our previous work
\cite{K:zombies} which we will restate here.

The first result is an elementary counterpart for Rubik groups to the
well-known corollary of the classification of finite simple groups that a
6-transitive subgroup of $\Sym(n)$ is \emph{ultratransitive}, by definition
that it contains $\Alt(n)$ or equivalently that it is $(n-2)$-transitive.
Say that a group homomorphism
\[ f:J \to \Sym(n,G)\]
is \emph{$G$-set $k$-transitive} if it acts transitively on ordered lists
of $k$ elements that all lie in distinct $G$-orbits.  Say likewise that
it is \emph{$G$-set ultratransitive} if its image contains $\Rub(n,G)$.

\begin{theorem}[{\cite[Thm.~3.10]{K:zombies}}]
Let $G$ be a group and let $n \ge 7$ be an integer such that $\Alt(n-2)$
is not a quotient of $G$.  Suppose that a homomorphism
\[ f:J \to \Sym(n,G) \]
from a group $J$ is $G$-set 2-transitive and that its
projection $\Rub(n,G) \to \Alt(n)$ is 6-transitive (and therefore
ultratransitive).  Then $f$ is $G$-set ultratransitive.
\label{th:rubik} \end{theorem}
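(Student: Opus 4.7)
The plan is to show the image $H = f(J)$ contains $\Rub(n, G)$. Write $\pi : \Sym(n, G) \to \Sym(n)$ for the natural projection onto the permutation factor, and let $N = H \cap G^n$, so that $N \normaleq H$ is the kernel of $\pi$ restricted to $H$. By hypothesis $\pi(H)$ is 6-transitive on $n \ge 7$ letters, so the classical theorem of Jordan (or equivalently the CFSG classification of multiply-transitive permutation groups) gives $\pi(H) \supseteq \Alt(n)$. Consequently $N$ is an $\Alt(n)$-invariant subgroup of $G^n$ under coordinate permutation, and $H$ contains a lift of every element of $\Alt(n)$.

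The main step is to show $N \supseteq \Rub(n, G) \cap G^n = \{(g_k) \in G^n : \prod_k g_k \in [G, G]\}$. For distinct $i, j \in \{1, \ldots, n\}$, let $H_{ij} \leq H$ be the subgroup of $h \in H$ whose permutation fixes both $i$ and $j$, and consider the homomorphism
\[
  \rho_{ij} : H_{ij} \longto G \times G \times \pi(H_{ij})
\]
sending $h = ((v_k); \sigma)$ to $(v_i, v_j, \sigma|_{\{1, \ldots, n\} \smallsetminus \{i, j\}})$. The $G$-set 2-transitivity hypothesis provides, for each $(g, g') \in G \times G$, elements of $H_{ij}$ realizing $(v_i, v_j) = (g, 1)$ and $(1, g')$, so $\rho_{ij}$ surjects onto the first two factors, and by the first paragraph $\pi(H_{ij}) \supseteq \Alt(n-2)$. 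I would then invoke \Lem{l:zorn}: the group $G^2$ is Zornian, and the hypothesis that $\Alt(n-2)$ is not a quotient of $G$ ensures that no nonabelian simple quotient of $G^2$ is involved in $\Alt(n-2)$, with the $\Z/2$ sign contribution from $\Sym(n-2)$ absorbed by restricting (if necessary) to $H \cap (G^n \rtimes \Alt(n))$. This forces $\rho_{ij}$ to surject onto $G^2 \times \Alt(n-2)$, so specializing the $\Sym$-factor to the identity produces, for every $(g, g') \in G^2$, an element of $N$ whose coordinates at positions $i$ and $j$ are $g$ and $g'$.

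Varying $(i, j)$ and applying the $\Alt(n)$-invariance of $N$ plus a Ribet-style product-structure analysis in the spirit of \Lem{l:goursrib} and \Lem{l:ribetn} then forces $N$ to contain every element of $G^n$ whose coordinatewise product lies in $[G, G]$. A final lifting step uses the perfectness of $\Alt(n)$: the map $H \cap \pi^{-1}(\Alt(n)) \to G^\ab$ has the same image as $N \to G^\ab$ (any homomorphism from the perfect group $\Alt(n)$ to an abelian group is trivial), so for each $\sigma \in \Alt(n)$ one can adjust the lift of $\sigma$ by an element of $N$ to land inside $\Rub(n, G)$. Combined with $N \supseteq \Rub(n, G) \cap G^n$, this gives $H \supseteq \Rub(n, G)$. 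The main obstacle is the bootstrap from pairwise surjectivity of $N$ to full containment of $\Rub(n, G) \cap G^n$; this is precisely where the hypothesis that $\Alt(n-2)$ is not a quotient of $G$ is essential, since it rules out the twisted-diagonal subgroups of $\Sym(n, G)$ that would otherwise satisfy both transitivity hypotheses without containing $\Rub(n, G)$.
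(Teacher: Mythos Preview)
The paper does not prove this theorem; it is quoted without argument from the authors' earlier work \cite[Thm.~3.10]{K:zombies}, as the surrounding text makes explicit (``We need two results about Rubik groups from our previous work \cite{K:zombies} which we will restate here''). So there is no proof in the present paper to compare your proposal against.

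That said, your outline has the same architecture as the original argument in \cite{K:zombies}: pass to the image $H=f(J)$, use 6-transitivity to get $\pi(H)\supseteq\Alt(n)$, study the two-point stabilizer $H_{ij}$ to force $N=H\cap G^n$ to surject onto any pair of coordinates, and then bootstrap to $N\supseteq\Rub(n,G)\cap G^n$. Two places in your write-up are too loose to stand as a proof. First, your appeal to \Lem{l:zorn} with target $G^2\times\pi(H_{ij})$ requires that no simple quotient of $G^2$ be \emph{involved} in $\pi(H_{ij})\le\Sym(n-2)$; the hypothesis only excludes $\Alt(n-2)$ as a quotient of $G$, not smaller alternating groups or $\Z/2$, so the reduction has to be organized more carefully (in \cite{K:zombies} one works with a single $G$-factor against the permutation factor, not $G^2$ at once, and handles the abelian piece separately). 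Second, the passage ``pairwise surjectivity of $N$ plus $\Alt(n)$-invariance \dots\ forces $N$ to contain every element of $G^n$ whose coordinatewise product lies in $[G,G]$'' is exactly the heart of the matter and cannot be discharged by invoking \Lem{l:goursrib} or \Lem{l:ribetn}, since those lemmas assume the factors are nonabelian simple or perfect, whereas $G$ here is an arbitrary group; the argument in \cite{K:zombies} proceeds instead via explicit commutator identities in $G^n\rtimes\Alt(n)$.
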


The other result says $\Rub(n,G)$ has a unique simple quotient when $\Alt(n)$
is a simple group.

\begin{lemma}[{\cite[Lem.~3.11]{K:zombies}}] If $G$ is any group and $n \ge
5$, then the only simple quotient of $\Rub(n,G)$ is $\Alt(n)$.
\label{l:only} \end{lemma}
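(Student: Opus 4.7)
The plan is to analyze $f \colon \Rub(n,G) \onto J$ for $J$ a nontrivial simple group via two normal subgroups of $\Rub(n,G)$: the kernel $B := \Rub(n,G) \cap G^n$ of the natural map onto $\Alt(n)$, and $P := [G,G]^n \subseteq B$, which is normal in $\Sym(n,G)$ and hence in $\Rub(n,G)$. The surjection $\Rub(n,G) \onto \Alt(n)$ already exhibits $\Alt(n)$ as a simple quotient (since $n \geq 5$), so the task reduces to showing $J \cong \Alt(n)$.

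A preliminary observation I would establish first: $P \subseteq [\Rub(n,G),\Rub(n,G)]$. Let $e_{ij}(g) \in \Rub(n,G)$ denote the wreath-product element with $g$ in position $i$, $g^{-1}$ in position $j$, and $1$ elsewhere. A direct computation in $\Sym(n,G)$ gives $[e_{ij}(g),e_{ik}(h)] = [g,h]$ in position $i$ (for $i,j,k$ distinct, which exist since $n \ge 5$); this shows $[G,G]_i \subseteq [\Rub,\Rub]$ for each $i$, and hence $P \subseteq [\Rub,\Rub]$. Consequently, any homomorphism from $\Rub(n,G)$ to an abelian group kills $P$.

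By normality of $B$ and simplicity of $J$, $f(B) \in \{1,J\}$. If $f(B) = 1$, then $f$ factors through $\Alt(n)$ and $J \cong \Alt(n)$ by simplicity. Otherwise $f(B) = J$, and I split on $f(P) \in \{1,J\}$. When $f(P) = J$, the preliminary observation forces $J$ to be nonabelian, and \Lem{l:prodquo} applied to the direct product $P = [G,G]_1 \times \cdots \times [G,G]_n \onto J$ pins $f|_P$ to a single factor $[G,G]_i$, so $f([G,G]_j) = 1$ for $j \neq i$. But any $3$-cycle $\sigma \in \Alt(n)$ with $\sigma(i) = j$ lifts to $\tilde\sigma = (1;\sigma) \in \Rub(n,G)$, and $\tilde\sigma\,[G,G]_i\,\tilde\sigma^{-1} = [G,G]_j$ yields $f([G,G]_j) = f(\tilde\sigma)\,J\,f(\tilde\sigma)^{-1} = J \neq 1$, a contradiction.

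The remaining case is $f(B) = J$ with $f(P) = 1$. Since $B/P$ embeds into $(G^{\ab})^n$ as the sum-zero subgroup $V$, it is abelian, so $J$ must be abelian simple, i.e., $J \cong \Z/p$. Because $J$ is abelian, conjugation in $\Rub(n,G)$ acts trivially on $J = f(B)$ via inner automorphisms, forcing $f|_V$ to be invariant under the induced $\Alt(n)$-permutation action on $V$. For $n \geq 4$ the coinvariants $V_{\Alt(n)} = V/[\Alt(n),V]$ vanish: taking $\sigma = (1\,2\,3)$ and $v := e_1 - e_4 \in V$, direct computation gives $\sigma(v) - v = -(e_1 - e_2)$, and the $2$-transitivity of $\Alt(n)$ on $\{1,\dots,n\}$ then places every elementary difference $e_i - e_j$ in $[\Alt(n),V]$; since these generate $V$, we conclude $V = [\Alt(n),V]$. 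Hence $f|_V = 0$, contradicting $f(B) = J$. The main subtlety is this coinvariants computation, for which $n = 3$ would fail (a spurious $\Z/3$ arises); everything else is a clean case analysis using the two normal subgroups and \Lem{l:prodquo}.
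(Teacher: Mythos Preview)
The paper does not supply its own proof of this lemma; it is quoted from the authors' earlier article \cite[Lem.~3.11]{K:zombies} and used as a black box.  So there is nothing here to compare your argument against.

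That said, your argument is correct and self-contained.  A few remarks on the steps you should be sure you can justify if pressed:
\begin{itemize}
\item That $e_{ij}(g)\in\Rub(n,G)$: this follows because $e_{ij}(g)$ is the commutator $[(g_i,1),(1,(i\,j))]$ in $\Sym(n,G)$, where $g_i\in G^n$ has $g$ in slot $i$.
\item That $B=\Rub(n,G)\cap G^n$ is exactly the preimage in $G^n$ of the sum-zero subgroup $V\subseteq (G^{\ab})^n$: this comes from the standard computation $\Sym(n,G)^{\ab}\cong G^{\ab}\times\Z/2$ (the coinvariants $((G^{\ab})^n)_{S_n}\cong G^{\ab}$ via the sum map), so $\Rub(n,G)$ is precisely the set of $(v,\sigma)$ with $\sigma\in\Alt(n)$ and $\sum\bar v_i=0$.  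You use this implicitly when you identify $B/P$ with $V$.
\item In subcase $f(P)=J$, your use of \Lem{l:prodquo} requires $J$ nonabelian, which is exactly what your preliminary observation $P\subseteq[\Rub,\Rub]$ buys you.  If $G$ is abelian then $P=1$ and this subcase is vacuous; your case split handles this automatically.
\item Your coinvariants computation is clean.  Note that you need $n\ge 4$ both to have the element $e_1-e_4$ and to have $\Alt(n)$ act $2$-transitively; the hypothesis $n\ge 5$ comfortably covers this, and your parenthetical about $n=3$ is on point.
\end{itemize}
Nothing is missing.
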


\section{Equivariant circuits}
\label{s:circuits}

In this section, we review the $\ZSAT$ circuit model from our previous
work \cite{K:zombies}.

Let $A$ be a finite set with at least two elements, considered as a
computational alphabet.  A \emph{reversible circuit of width $n$} is a
bijection $A^n \to A^n$ expressed as a composition of bijective gates $A^k
\to A^k$ in the pattern of a directed, acyclic graph.  The gates are all
chosen from some fixed finite set of bijections.

Let $G$ be a non-trivial finite group acting on $A$ with a single fixed
point $z$, the \emph{zombie symbol}, and otherwise with free orbits.
Let $I$ and $F$ be two proper $G$-invariant subsets of $A$ that
contain the zombie symbol and are not just that symbol:
\[ \{z\} \subsetneq I, F \subsetneq A. \]
We interpret $I$ as an initial subalphabet and $F$ as a final subalphabet.
An instance $Z$ of $\ZSAT_{G,A,I,F}$ is a planar reversible circuit with gate
set $\Rub_G(A^2)$.  (Remark:  This gate set then generates $\Rub_G(A^k)$
for each $k > 2$.)  Then a certificate accepted by $Z$ is a solution to
the constraint satisfaction problem
\[ x \in I^n \qquad \mbox{and} \qquad Z(x) \in F^n, \]
where $n$ is the width of $Z$.  The counting problem $\shZSAT_{G,A,I,F}$
counts the number of such solutions to $Z$.

We will need the following technical result from our previous work.

\begin{theorem}[{\cite[Lem.~4.1]{K:zombies}}] $\shZSAT_{G,A,I,F}$ is almost
parsimoniously $\shP$-complete.  If $p$ is a counting problem in $\shP$,
then there is a polynomial-time reduction $f \in \FP$ such that
\[\shZSAT_{G,A,I,F}(f(x)) = \#G \cdot p(x) + 1 \]
for every instance $x$ of $p$, where the $+1$ term accounts for the trivial,
all zombie solution $(z,\dots,z)$.  More precisely, the number of free
orbits of nontrivial solutions is parsimoniously $\shP$-complete.
\label{th:zsat} \end{theorem}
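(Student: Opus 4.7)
The plan is to reduce almost parsimoniously from planar $\shRSAT$, which earlier lemmas of \cite{K:zombies} have shown to be parsimoniously $\shP$-complete (after separating out an ``all-zero'' trivial satisfying assignment). Given a $\shP$ problem $p$, I obtain a width-$n$ planar reversible bit circuit $R$ with bit sub-alphabets $I_0, F_0 \subseteq \{0,1\}$ whose nontrivial satisfying assignments number $p(x)$. The objective is to produce, in polynomial time, a $\shZSAT_{G,A,I,F}$ instance $Z$ such that (a) each nontrivial bit solution $b$ of $R$ lifts to a free $G$-orbit of $Z$-solutions of size $\#G$, and (b) the all-zero bit solution lifts to the unique all-zombie fixed point $(z,\dots,z)$, yielding $\shZSAT_{G,A,I,F}(Z) = \#G \cdot p(x) + 1$.

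Encoding and gate lifting. Since $\{z\} \subsetneq I \cap F$, I pick a free $G$-orbit $O \subseteq (I \cap F) \setminus \{z\}$ and a representative $o \in O$, and then encode bit $0$ by $z$ and bit $1$ by $o$, giving an injection $\iota\colon \{0,1\}^n \to A^n$ with image $\{z,o\}^n$. Each bit gate $g$ of $R$ must be lifted to a $G$-equivariant $\tilde g \in \Rub_G(A^k)$ with two properties: \textbf{(i)} on the $G$-saturation $G \cdot \iota(\{0,1\}^k)$ of the encoding inside $A^k$, $\tilde g$ acts as the unique $G$-equivariant extension of $g$; \textbf{(ii)} on every input outside this saturation, $\tilde g$ routes symbols so that the resulting state cannot be transported by the rest of the circuit into any output in $F^n$. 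Property (ii) will be enforced by designating a ``poison'' $G$-subset of $A \setminus F$ and pushing off-encoding inputs into it at the earliest opportunity; $G$-equivariance survives because $G$ acts freely on each non-zombie orbit. Membership of $\tilde g$ in the commutator subgroup $\Rub_G(A^k)$ can be arranged by composing with an auxiliary commutator of $G$-equivariant permutations acting only on unused orbits, exploiting the wreath-product structure $\Sym(n,G) \cong G \wr_m \Sym(n)$ and the richness of $\Rub_G$ from \Sec{s:group}, in particular \Lem{l:only}. Because $\Rub_G(A^2)$ generates $\Rub_G(A^k)$ for $k \ge 2$, arbitrary reversible bit logic can be simulated with the allowed gate set, and planarity is preserved since each $g$ is replaced locally.

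Accounting and the main obstacle. By construction every solution $x \in I^n$ of $Z$ with $Z(x) \in F^n$ falls into exactly one of two classes. Either $x = (z,\dots,z)$, which is automatically a solution because every $G$-equivariant bijection fixes the all-zombie tuple; or, by property (ii), $x$ lies in the saturation $G \cdot \iota(\{0,1\}^n)$, so $x = g \cdot \iota(b)$ for a unique nontrivial bit string $b$. In the latter case property (i) gives $Z(x) = g \cdot \iota(R(b))$, and $Z(x) \in F^n$ is equivalent to $R(b) \in F_0^n$; the $G$-orbit of $x$ is free of size $\#G$. Thus the number of nontrivial free orbits equals $p(x)$ exactly, proving the ``more precisely'' clause, and the total solution count is $\#G \cdot p(x) + 1$. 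The hard part is the joint realization of (i) and (ii) within $\Rub_G(A^2)$: the poisoning extension must genuinely block every off-encoding execution from reaching $F^n$ while remaining $G$-equivariant and expressible as a product of commutators. The strategy is to construct each $\tilde g$ orbit-by-orbit using the wreath decomposition of $\Sym_G(A^2)$ and to absorb any sign-parity obstructions into paired commutators on unused orbits, relying on \Lem{l:only} and the surjectivity results of \Sec{s:group}.
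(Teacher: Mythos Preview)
The paper does not actually prove this theorem; it is imported wholesale from \cite{K:zombies} as a black box, with only the remark following the statement indicating how the side conditions of the original lemma ($I \ne F$, $\#A \ge 2\#(I\cup F) + 3\#G + 1$, $\#I,\#F \ge 2\#G$) can be dropped.  So there is no in-paper proof to compare against, but those side conditions already flag where your sketch runs into trouble.

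Your proposal has two concrete gaps.  First, you pick $o$ in a free $G$-orbit $O \subseteq (I\cap F)\setminus\{z\}$, but nothing in the hypotheses guarantees that $I\cap F$ is larger than $\{z\}$: the only assumptions are $\{z\}\subsetneq I$ and $\{z\}\subsetneq F$, and the remark explicitly contemplates $I \ne F$ as one of the original side conditions.  So your encoding may not exist.  Second, and more seriously, the ``poisoning'' step is the entire content of the argument and you have only named it, not carried it out.  You must build, for each binary reversible gate, a $G$-equivariant permutation in $\Rub_G(A^2)$ that simultaneously (i) acts correctly on the encoded orbits, (ii) irreversibly contaminates any off-encoding input so that no later sequence of gates can bring it back into $F^n$, and (iii) lies in the commutator subgroup.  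Doing (ii) correctly is delicate: you need enough spare orbits in $A$ to serve as an absorbing ``warning'' state, and you need the contamination to propagate forward through every subsequent gate.  This is exactly why the original lemma in \cite{K:zombies} carries the size hypothesis $\#A \ge 2\#(I\cup F) + 3\#G + 1$, which your sketch never invokes.  Finally, your appeal to \Lem{l:only} is misplaced: that lemma identifies the simple quotients of $\Rub(n,G)$ and says nothing about realizing a prescribed equivariant permutation as a product of commutators.
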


\begin{remark} In our previous work, we did not put the zombie symbol
$z$ in the sets $I$ and $F$, instead setting the initial and final sets
to be $(I \cup \{z\})^n$ and $(F \cup \{z\})^n$.  We also assumed the
side conditions that $I \ne F$, that $\#A \ge 2\#(I \cup F) + 3\#G + 1$,
and that $\#I, \#F \ge 2\#G$.  The first two of these conditions were
recognized as optional, but in fact they are all optional.  We can emulate
the first condition by adding a layer of unary gates at the beginning or
end, and we can attain the other two conditions by replacing $(A,I,F)$
by $(A^k,I^k,F^k)$ for some constant $k$.
\end{remark}

\section{Braid group actions}
\label{s:refine}

The main goal of this section is \Thm{th:refine}.  This theorem is a
refinement, in the special case that $G$ is simple, of a result of Roberts
and Venkatesh \cite[Thm.~5.1]{RV:hurwitz}.

\subsection{Conjugacy-restricted homomorphisms and colored braid subgroups}
\label{ss:actions}

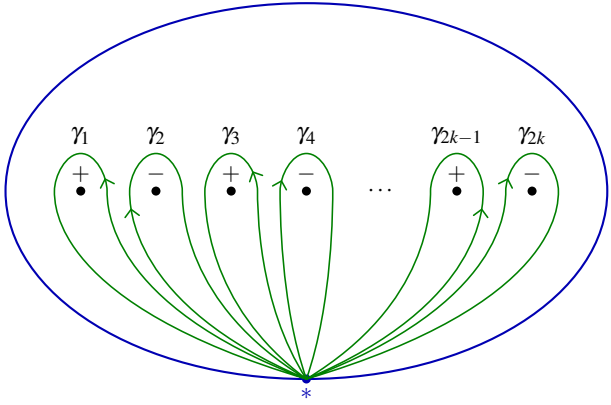
\begin{figure}
\begin{tikzpicture}[semithick,decoration={markings,
    mark=at position 0.42 with {\arrow{angle 90}}}]
\draw[thick,darkblue] (0,0) circle [x radius=4,y radius=2.5];
\coordinate (*) at (0,-2.5);
\fill[darkblue] (*) circle[radius=0.06];
\draw[darkblue] (*) node[below] {$*$};
\foreach \i in {-3,-2,-1,0,2,3} {
	\fill (\i,0) circle[radius=0.06]; }
\foreach \i in {-3,-1,2} {
	\draw (\i,0) node[above] {$+$};
	\draw[darkgreen,postaction={decorate}] (*) .. controls (\i+.35,-1.5) and
        (\i+.35, -0.2) .. (\i+.35,0) .. controls (\i+.35,.2) and (\i+.2,.5)
        .. (\i,.5) .. controls (\i-.2,.5) and (\i-.35,.2) .. (\i-.35,0)
        .. controls (\i-.35,-.2) and (\i-.35,-1.5) .. (*) ; }
\draw (-3,0.5) node[above] {$\gamma_{1}$};
\draw (-1,0.5) node[above] {$\gamma_{3}$};
\draw (2,0.5) node[above] {$\gamma_{2k-1}$};
\foreach \i in {-2,0,3} {
    \draw (\i,0) node[above] {$-$};
    \draw[darkgreen,postaction={decorate}] (*) .. controls (\i-.35,-1.5) and 
        (\i-.35, -0.2) .. (\i-.35,0) .. controls (\i-.35,.2) and (\i-.2,.5)
        .. (\i,.5) .. controls (\i+.2,.5) and (\i+.35,.2) .. (\i+.35,0)
        .. controls (\i+.35,-.2) and (\i+.35,-1.5) .. (*); }
\draw (-2,0.5) node[above] {$\gamma_{2}$};
\draw (0,0.5) node[above] {$\gamma_{4}$};
\draw (3,0.5) node[above] {$\gamma_{2k}$};	
\draw (1,0) node {$\cdots$};
\end{tikzpicture}
\caption{Generators for $\pi_1(D^2\setminus [2k])$.}
\label{f:generators} \end{figure}

Recall that $D^2 \setminus [2k]$ denotes the disk $D^2$ minus a set
$[2k]$ of $2k$ points.  As shown in \Fig{f:generators}, place the points in a
line and alternately label them $+$ and $-$, and choose a base point $*$
that is not on the same line.  Also as shown, choose a list of generators
of $\pi_1(D^2 \setminus [2k])$ represented by simple closed curves
$\gamma_1,\dots,\gamma_{2k}$, where each $\gamma_i$ winds counterclockwise
around the puncture $p_i$ when it is positive (when $i$ is odd) and clockwise
when it is negative (when $i$ is even).  Finally we choose one more curve
\[ \gamma_\infty = \gamma_{2k}^{-1} \gamma_{2k-1} \cdots
    \gamma_4^{-1} \gamma_3 \gamma_2^{-1} \gamma_1 \]
representing the boundary of the disk.  Note that we concatenate from left to
right, \eg, $\gamma_1 \gamma_2$ is the element of $\pi_1(D^2 \setminus [2k])$
that first traverses $\gamma_1$ and then $\gamma_2$.

When $G$ is a finite group and $C \subset G$ is a union of conjugacy
classes, we define three sets of homomorphisms from $\pi_1(D^2 \setminus
[2k])$ to $G$:
\begin{align*}
T_k(G,C) &\defeq \{ f:\pi_1(D^2 \setminus [2k])
    \to G \mid f(\gamma_i) \in C \} \\
\hR_k(G,C) &\defeq \{ f \in T_k(G,C) \mid f(\gamma_\infty) = 1 \} \\
R_k(G,C) &\defeq \{f \in \hR_k(G,C) \mid f \text{ is onto}\}.
\end{align*}
When $G$ and $C$ are clear from context, we will also use the abbreviations
\[ T_k \defeq T_k(G,C) \qquad \hR_k \defeq \hR_k(G,C)
    \qquad R_k \defeq R_k(G,C). \]
Note that $C$ will often but not always be a single conjugacy class.

Since $\pi_1(D^2 \setminus [2k])$ is freely generated by
$\gamma_1,\gamma_2,\dots,\gamma_{2k}$, the set $T_k(G,C)$ is bijective
with $C^{2k}$.  By abuse of notation, we will sometimes specify elements
of $T_k(G,C)$ as lists of elements in $C$.

Since $f(\gamma_\infty) = 1$ in both $\hR_k$ and $R_k$, each such $f$
factors through $\pi_1(S^2 \setminus [2k])$, where $S^2 \setminus [2k]$
is a punctured sphere obtained by collapsing the boundary $\partial D^2$
to a point.  Also by abuse of notation, we will also interpret each such $f$
as having this domain instead.

The homomorphism sets $T_k(G,C)$, $\hR_k(G,C)$, and $R_k(G,C)$ are all
invariant under the colored braid group $B_{k,k} \le B_{2k}$, by definition
the subgroup of the braid group that preserves the labels $+$ and $-$ of
the $2k$ punctures.  The goal of this section is to show that the action
of $B_{k,k}$ is large enough that we can implement gates in $\ZSAT$ with it.

\subsection{Invariant homology classes and the Conway-Parker theorem}
\label{ss:CP}

In this subsection, let $G$ be any finite group which is generated by a
single conjugacy class $C$.  We describe the orbits of the $B_{k,k}$ action
on $R_k$ in the limit as $k \to \infty$.  We will say that a property of
the action holds \emph{eventually} if it is a stable property in this limit,
\ie, if it is true for all $k$ large enough.

The main tool that we need is the Brand classifying space $B(G,C)$
\cite{Brand:branched}.  This space is a modification of the usual
classifying space $B(G)$ (often written $BG$) of a group $G$ ``relative" to
a conjugacy class $C$.  It has a number of important properties for our
purposes, some described by Brand, and some given by the second author
\cite{Samperton:schur}.  Before stating these properties, we first review
the definition.  The free loop space $LB(G)$ comes with an evaluation map
\[ \ev:LB(G) \times S^1 \to B(G), \]
and it has a connected component $L_CB(G)$ whose loops represent the chosen
conjugacy class $C \subseteq G$.  We define $B(G,C)$ by gluing $B(G)$ to
$L_CB(G) \times D^2$ using the evaluation map:
\begin{eq}{e:brand} B(G,C) \defeq (B(G) \sqcup L_CB(G) \times D^2)/\ev.
\end{eq}
In other words, an element $f \in L_CB(G)$ is also a continuous function
$f:S^1 \to B(G)$, and we identify $(f,x) \in L_CB(G) \times D^2$ with $f(x)
\in B(G)$ when $x \in S^1 = \partial D^2$.  We also retain a base point for
$B(G)$ and thus $B(G,C)$, even though we use the free loop space rather
than the based loop space to define the latter.

Recall also that the homology of a group is by definition the homology
of a classifying space, $H_*(G) = H_*(B(G))$.

\begin{remark} Following Ellenberg, Venkatesh, and Westerland
\cite{EVW:hurwitz2}, Roberts and Venkatesh \cite{RV:hurwitz} extend the
notation $H_*(G)$ in an ad hoc way to a reduced Schur multiplier that they
denote ``$H_2(G,C)$".  We will later denote the reduced Schur multiplier
as $M(G,C)$ instead.  It is a subgroup of $H_2(B(G,C))$, which we will
also not abbreviate as ``$H_2(G,C)$".   The reason is that we do not know
a natural interpretation of either $M(G,C)$ or $H_*(B(G,C))$ as a relative
homology group.
\end{remark}

\begin{proposition}[\cite{Samperton:schur}] Let $B(G,C)$ be the Brand
classifying space for a finite group $G$ generated by a conjugacy class
$C$.  For each $a$ and $b$, let $\Sigma_{a,b} = S^2 \setminus [a+b]$
be a punctured sphere with $a$ points marked $+$, $b$ points marked $-$,
and a base point $*$.  Then:
\begin{enumerate}
\item Let $f:\pi_1(\Sigma_{a,b}) \to G$ be a group homomorphism such that
each $+$ point has counterclockwise monodromy in $C$ and each $-$ point
has clockwise monodromy in $C$.  Then $f$ is represented by a pointed map
$\phi:S^2 \to B(G,C)$.
\item Every pointed map $\phi:S^2 \to B(G,C)$ in general position yields
a homomorphism $f:\pi_1(\Sigma_{a,b}) \to G$ as in part 1, for some $a$
and $b$.   The maps $\phi_0 \sim \phi_1$ are homotopic if and only if the
homomorphisms $f_0$ and $f_1$ are connected by a concordance
\[ f:\pi_1((S^2 \times I) \setminus L) \to G, \]
where $L$ is a tangle.
\item Given that $C$ generates $G$, $B(G,C)$ is simply connected.
\item There is an exact sequence
\[ Z(c)_\ab \stackrel{\kappa}\longto H_2(G) \stackrel{\beta}{\longto}
    H_2(B(G,C)) \stackrel{\sigma}{\longto} \Z \to 0, \]
where $Z(c) \subseteq G$ is the centralizer of any one element $c \in C$.
Given that $G$ is finite, the image of $\beta$ is $H_2(B(G,C))_\tor$.
\item Given $f$, $\phi$, and $\sigma$ from the previous,
\[ (\sigma \circ \phi_*)([S^2]) = a-b \in \Z. \]
\end{enumerate}
\label{p:brand} \end{proposition}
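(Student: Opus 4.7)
The plan is to prove all five parts by moving between the pushout definition \eqref{e:brand} of $B(G,C)$ and standard obstruction theory for maps into $B(G) = K(G,1)$. Two classical facts drive the argument: the connected component $L_C B(G)$ of the free loop space is homotopy equivalent to $B(Z(c))$ for any $c \in C$, and the evaluation at a basepoint $L_C B(G) \to B(G)$ is modelled by the inclusion $Z(c) \hookrightarrow G$.

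For parts 1 and 2, the tool is transversality. Given a homomorphism $f:\pi_1(\Sigma_{a,b}) \to G$, the $K(G,1)$ property produces a pointed map $\tilde f:\Sigma_{a,b} \to B(G)$. A small meridional loop around each puncture represents an element of $L_C B(G)$, and the attached cell $L_C B(G) \times D^2$ supplies a canonical null-homotopy of that loop inside $B(G,C)$. Capping every puncture extends $\tilde f$ to the desired $\phi:S^2 \to B(G,C)$. Conversely, I would put a given $\phi$ in general position with respect to the central slice $L_C B(G) \times \{0\}$: its preimage is a finite collection of points whose signs come from the local orientation of $\phi$, yielding the $\pm$ punctures, and the restriction of $\phi$ to their complement lands in $B(G)$, recovering $f$. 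Applying the same argument one dimension up to a homotopy $S^2 \times I \to B(G,C)$ replaces the finite preimage by a properly embedded $1$-manifold $L$---a tangle---whose complement carries the desired concordance.

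Part 3 is Van Kampen applied to the pushout: the kernel of $G = \pi_1(B(G)) \to \pi_1(B(G,C))$ is normally generated by loops of the form $\{\alpha\} \times S^1 \subseteq L_C B(G) \times D^2$, which represent elements of $C$. Since $C$ generates $G$ and is closed under conjugation, its normal closure is $G$, and $\pi_1(B(G,C)) = 1$.

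For parts 4 and 5, I would apply the Mayer--Vietoris sequence to the pushout, combined with $L_C B(G) \simeq B(Z(c))$ and K\"unneth. Using part 3 to kill $H_1(B(G,C))$, the relevant segment becomes
\begin{align*}
H_2(Z(c)) \oplus Z(c)_\ab &\to H_2(G) \oplus H_2(Z(c)) \to H_2(B(G,C)) \\
&\to Z(c)_\ab \oplus \Z \to G_\ab \oplus Z(c)_\ab \to 0.
\end{align*}
A crucial simplification is that the inclusion of the central slice $L_C B(G) \times \{0\} \hookrightarrow B(G,C)$ is homotopic---via any radial path in $D^2$---to the composition $L_C B(G) \xrightarrow{\iota} B(G) \hookrightarrow B(G,C)$. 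This collapses the $H_2(Z(c))$ summand of the middle group into $\beta(H_2(G))$, so $\im(\beta) = \ker(\sigma)$. The map $\kappa:Z(c)_\ab \to H_2(G)$ is the transgression portion of $\ev_*$ on the K\"unneth summand $H_1(Z(c)) \otimes H_1(S^1)$. The connecting map $\partial$ lands in the kernel of the rightmost arrow, a rank-one subgroup of $Z(c)_\ab \oplus \Z$ generated by a multiple of $[S^1]$ whose multiplier is the order of $[c]$ in $G_\ab$; choosing that generator defines $\sigma:H_2(B(G,C)) \to \Z$. Finiteness of $H_2(G)$ forces $\im(\beta)$ to be torsion, and since the cokernel $\Z$ is torsion-free, $\im(\beta)$ exhausts the torsion subgroup. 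Part 5 is then a local calculation: under $\partial$, each $+$ puncture contributes $+[S^1]$ and each $-$ puncture contributes $-[S^1]$ to $\partial\phi_*([S^2])$, totalling $a-b$ after normalization.

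The main obstacle is the bookkeeping in part 4: identifying the Mayer--Vietoris connecting map through the K\"unneth decompositions, verifying that the apparent $H_2(Z(c))$ contribution in the middle is absorbed into the image of $\beta$ through the transgression $\kappa$, and fixing the normalization of $\sigma$ so that the geometric local count from part 5 agrees with the algebraic definition.
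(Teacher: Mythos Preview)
The paper does not prove this proposition; it is imported from \cite{Samperton:schur} and used as a black box, so there is no argument here to compare yours against.

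That said, your outline is correct and consistent with the fragments the paper does reveal.  Your transversality construction for parts 1--2, van Kampen for part 3, and Mayer--Vietoris on the pushout \eqref{e:brand} for parts 4--5 are the natural tools, and your key observation that the core $L_C B(G)\times\{0\}\hookrightarrow B(G,C)$ is homotopic through the disk to the composite $L_C B(G)\to B(G)\hookrightarrow B(G,C)$ is exactly what absorbs the stray $H_2(Z(c))$ summand into $\im\beta$.  Your description of $\kappa$ as the $H_1(Z(c))\otimes H_1(S^1)$ portion of $\ev_*$ agrees on the nose with the paper's own description in the proof of \Lem{l:kunneth}, where $\kappa$ is presented as the K\"unneth cross product along $Z(c)\times\langle c\rangle\to G$.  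The normalization issue you flag is real: the Mayer--Vietoris boundary lands in $\{(0,n):n\bar c=0\text{ in }G_\ab\}\cong m\Z$ where $m$ is the order of $\bar c$, and any $f$ on $\Sigma_{a,b}$ forces $m\mid(a-b)$, so part 5 is consistent; the sequence as written with target $\Z$ is cleanest when $G$ is perfect, which is the only case the paper actually uses (\Sec{ss:perfect}).
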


\begin{remark} The Brand classifying space $B(G,C)$ exists for any $G$
(not necessarily finite) and any union of conjugacy classes (not
just one, and not necessarily generating $G$).  In general, the homotopy
classes $[M,B(G,C)]$ from a smooth manifold $M$ of any dimension classify
the concordance classes of $C$-branched $G$-covers of $M$, such that the
codimension 2 branch locus has a distinguished normal framing.  Likewise the
cobordism groups $\Omega_n(B(G,C))$ classify the cobordism classes of such
branched coverings of $n$-manifolds.
\end{remark}

Following \Prop{p:brand}, we introduce the abuse of notation
\[ f_* = \phi_*:H_2(S^2) \to H_2(B(G,C)).\]
We also define
\[ M(G,C) \defeq H_2(B(G,C))_\tor. \]
The group $M(G,C)$, the \emph{reduced Schur multiplier}, is a quotient of
the usual Schur multiplier $M(G) = H_2(G)$.  Now suppose that
\[ f:\pi_1(S^2 \setminus [2k]) \to G \]
is an element of $\hR_k$, with $k = a = b$.  Then
\[ f_*([S^2]) \in H_2(B(G,C)) \]
maps to zero in $H_2(B(G,C))_\free$.  It thus lies in $M(G,C)$.  We define the
(branched) \emph{Schur invariant} of $f$ to be
\[ \sch(f) \defeq f_*([S^2]) \in M(G,C). \]
By construction, the function
\[ \sch:\hR_k \to M(G,C) \]
is invariant under the action of the colored braid group $B_{k,k}$.

\begin{theorem}[Ellenberg-Venkatesh-Westerland
    {\cite[Thm. 1.1]{Samperton:schur}}]
Let $G$ be a finite group generated by a conjugacy class $C$.
Then, eventually, the Schur invariant yields a bijection
\[\sch:R_k/B_{k,k} \stackrel{\cong}{\longto} M(G,C). \]
In particular, $\sch$ is eventually injective; \ie, it is eventually a
complete orbit invariant for the action of $B_{k,k}$ on $R_k$.
\label{th:evw} \end{theorem}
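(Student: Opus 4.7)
The plan is to decompose the theorem into eventual surjectivity and eventual injectivity of the map $\sch : R_k/B_{k,k} \to M(G,C)$, each proved by passing through the topological dictionary between homomorphisms and maps to $B(G,C)$ supplied by \Prop{p:brand}.

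For surjectivity, fix $m \in M(G,C) \subseteq H_2(B(G,C))$. Since $B(G,C)$ is simply connected (\Prop{p:brand}, part 3), Hurewicz identifies $\pi_2(B(G,C))$ with $H_2(B(G,C))$, so $m$ is represented by a pointed map $\phi : S^2 \to B(G,C)$. Putting $\phi$ in general position and applying part 2 of \Prop{p:brand} produces a homomorphism $f : \pi_1(\Sigma_{a,b}) \to G$; by part 5, the torsion condition on $m$ forces $a - b = \sigma(m) = 0$, so $f \in \hR_a$. The homomorphism $f$ may fail to be surjective, but after taking the connected sum with a fixed auxiliary $g \in R_\ell$ whose image is all of $G$ and whose Schur invariant vanishes---which exists by choosing any surjection onto $G$ and stabilizing with enough canceling $(c,c^{-1})$ pairs---the combined homomorphism lies in $R_{a+\ell}$ with Schur class still equal to $m$, by additivity of the fundamental class under connected sum. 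Since $M(G,C)$ is finite, a single $k$ suffices for all classes.

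The deeper half is eventual injectivity. Given $f_0, f_1 \in R_k$ with $\sch(f_0) = \sch(f_1)$, another application of Hurewicz upgrades the equality $\phi_{0*}[S^2] = \phi_{1*}[S^2]$ in $H_2(B(G,C))$ to a pointed homotopy $\phi_0 \simeq \phi_1$. By part 2 of \Prop{p:brand}, this homotopy is realized by a concordance through a tangle $L \subseteq S^2 \times I$, and the task reduces to replacing the concordance by an honest isotopy, \ie, by an element of $B_{k,k}$. Morse theory on the height function decomposes $L$ into a sequence of births, deaths, and saddle moves; births and deaths correspond exactly to stabilization by canceling $(c,c^{-1})$ pairs of punctures, while saddles can be rerouted by local braids once sufficiently many extra strands are available to absorb them. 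Stringing these moves together shows that after some stabilization $f_0$ and $f_1$ become $B_{k',k'}$-equivalent, and the Conway--Parker stabilization pattern guarantees that destabilization is again a braid operation, so that once $k$ is in the stable range the original $f_0, f_1$ already lie in the same $B_{k,k}$-orbit.

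The principal obstacle is this concordance-to-isotopy step: a saddle in a tangle cobordism is a genuine singularity that is not in itself a braid move, and the strategy only succeeds once $k$ lies in a stable range in which arbitrarily many canceling pairs are available to absorb each saddle. Making this precise amounts to proving homological stability of the relevant Hurwitz space and identifying $\pi_0$ in the limit with $H_2(B(G,C))_\tor = M(G,C)$; this is the technical core of \cite{EVW:hurwitz2} and its topological reworking in \cite{Samperton:schur}, and once it is in hand the identification of $\sch$ with the tautological map to $\pi_0$ makes the theorem essentially formal.
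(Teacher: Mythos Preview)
The paper does not give its own proof of this theorem: it is quoted as an external result, attributed to Ellenberg--Venkatesh--Westerland and to \cite{Samperton:schur}, with only a historical remark following the statement. So there is no in-paper argument to compare your proposal against.

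That said, your outline is broadly the shape of the topological proof in \cite{Samperton:schur}: use the Brand space and Hurewicz to translate between $R_k$ and $\pi_2(B(G,C))$, handle surjectivity by representing a given torsion class and then boundary-summing with a fixed surjective, Schur-trivial element, and reduce injectivity to promoting a concordance to a braid equivalence after stabilization. Your surjectivity paragraph is essentially complete; note that the auxiliary $g$ with $\sch(g)=0$ is most cleanly obtained from \Lem{l:schprops}(2), since any plat-bounding surjection works.

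Two cautions on the injectivity sketch. First, the sentence ``the Conway--Parker stabilization pattern guarantees that destabilization is again a braid operation'' is circular as written: Conway--Parker-type stabilization is exactly what you are trying to establish here, so you cannot invoke it to undo the extra strands. The honest statement is that one proves the orbits stabilize (injectivity holds for all $k \ge k_0$), not that one can destabilize back down. Second, ``saddles can be rerouted by local braids once sufficiently many extra strands are available'' is the entire content of the theorem and is not a local move in any elementary sense; the actual arguments in \cite{EVW:hurwitz2,Samperton:schur} do not proceed by Morse-theoretic saddle cancellation on a single concordance but rather by a direct combinatorial or group-theoretic analysis of the stabilized orbit set. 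Your final paragraph correctly flags this as the technical core and defers to the references, which is the right posture---but then the middle paragraph oversells what the Morse decomposition actually delivers.
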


\begin{remark} \Thm{th:evw} first appeared in version 1 of an arXiv e-print
by Ellenberg, Venkatesh, and Westerland \cite{EVW:hurwitz2}.  This e-print
was later withdrawn for unrelated reasons, but (besides that arXiv versions
are permanent) the argument was later cited and sketched by Roberts and
Venkatesh \cite{RV:hurwitz}.   The second author \cite{Samperton:schur}
then found a topological proof of the same result using the Brand
classifying space.  The new results of \cite{Samperton:schur} also hold for
surfaces with either genus or punctures or both, and thus subsume a result of
Dunfield and Thurston \cite[Thm.~6.23]{DT:random}.   In fact, \Thm{th:evw} also
holds when $C$ is a union of conjugacy classes rather than just one.  (In full
generality, the Schur invariant $\sch(f)$ lies in a torsor of the reduced
Schur multiplier $M(G,C)$ rather than directly in this abelian group.)
The original result along these lines is the unpublished Conway-Parker
theorem, which is the case $C = G$, and which was later proven in the
literature by Fried and V\"olklein \cite{FV:inverse}.
\end{remark}

We will use two basic properties of the Schur invariant, one of which
requires a definition:  Say that $f \in \hR_k$ \emph{bounds a plat} if
there is an inclusion $S^2 \setminus [2k] \to B^3$ such that $f$ extends
to a homomorphism from the fundamental group of the complement of a trivial
tangle in $B^3$ with oriented arcs.

\begin{lemma}
The Schur invariant has the following properties.
\begin{enumerate}
\item If $f \in \hR_a$ and $g \in \hR_b$, and $f\;\#\;g \in \hR_{a+b}$
is their boundary sum, then $\sch(f\;\#\;g) = \sch(f) + \sch(g)$.
\item If $f \in \hR_k$ bounds a plat, then $\sch(f) = 0$.
\end{enumerate}
\label{l:schprops} \end{lemma}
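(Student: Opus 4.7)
The plan is to work directly with the topological interpretation of the Schur invariant, so that both parts reduce to elementary computations in the singular homology of the Brand classifying space $B(G,C)$. Recall from \Prop{p:brand} that each $f \in \hR_k$ is represented by a pointed map $\phi_f : S^2 \to B(G,C)$, and by definition $\sch(f) = (\phi_f)_*([S^2])$, where $[S^2]$ is the fundamental class.

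For part 1, first realize the boundary sum $f\;\#\;g$ geometrically: remove a small open unpunctured disk from each of the two spheres carrying $f$ and $g$, and glue along the resulting boundary circles to produce a single $S^2$ marked with $2(a+b)$ points carrying the combined monodromy (the outside monodromy being trivial on both sides guarantees the gluing is consistent). The associated classifying map $\phi_{f\;\#\;g}$ can then be taken to factor as the standard pinch map $S^2 \to S^2 \vee S^2$ followed by the wedge $\phi_f \vee \phi_g$. Since the pinch map induces the diagonal $[S^2] \mapsto [S^2] + [S^2]$ on $H_2$, naturality of the pushforward gives
\[
\sch(f\;\#\;g) = (\phi_f)_*([S^2]) + (\phi_g)_*([S^2]) = \sch(f) + \sch(g).
\]

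For part 2, suppose $f \in \hR_k$ bounds a plat, so that $S^2 \setminus [2k]$ embeds in $B^3$ as the boundary of the complement of a trivial oriented tangle $T \subset B^3$ and $f$ extends to a homomorphism $\pi_1(B^3 \setminus T) \to G$. By the 3-manifold version of \Prop{p:brand} spelled out in the remark following it, such a framed $C$-branched $G$-cover of $B^3$ is classified by a pointed map $\Phi : B^3 \to B(G,C)$ whose restriction to $\partial B^3$ is homotopic to $\phi_f$. Since $[S^2] = \partial[B^3]$ is a boundary in $B^3$, naturality gives
\[
\sch(f) = (\phi_f)_*([S^2]) = \Phi_*\bigl(\partial [B^3]\bigr) = 0
\]
in $H_2(B(G,C))$, hence in $M(G,C)$.

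The only step that is not completely formal is the invocation of the 3-dimensional extension of \Prop{p:brand} in part 2: we need $B(G,C)$ to classify $C$-branched covers of $B^3$ with the framed codimension-2 branch locus given by the oriented trivial tangle $T$, not merely branched covers of surfaces. This is exactly the content of the remark after \Prop{p:brand}, so once it is cited the vanishing of $\sch(f)$ is immediate. Part 1 is then a straightforward naturality argument for the pinch map on $H_2$.
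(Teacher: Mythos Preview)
Your proof is correct and takes essentially the same approach as the paper: part 1 is identical (the boundary sum realizes the $\pi_2$ group law via the pinch map, giving additivity on $H_2$), and for part 2 the paper simply observes that bounding a plat makes $f$ null-concordant, so by \Prop{p:brand} part 2 the representing map $\phi_f$ is null-homotopic and hence null-homologous---which is the same content as your extension of $\phi_f$ over $B^3$ via the remark, since a null-homotopy of $\phi_f$ is precisely such an extension.
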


\begin{proof}
Part 1: The boundary sum of $f$ and $g$ corresponds to the group law in
\[ \pi_2(B(G,C)) = [S^2:B(G,C)]. \]
Thus:
\begin{align*} \sch(f\;\#\;g) &= (f\;\#\;g)_*([S^2])
    = f_*([S^2]) + g_*([S^2]) \\ &= \sch(f) + \sch(g).
\end{align*}

Part 2: The map $f$ is null-concordant by hypothesis, hence $\sch(f) = 0$
is null-homologous by \Prop{p:brand}.
\end{proof}

Our reduction from $\ZSAT$ to $\#H(G,C)$ makes special use
of maps $f \in \hR_k$ with $\sch(f) = 0$.  Hence we define
\begin{align*}
\hR_k^0 &\defeq \{ f \in \hR_k \mid \sch(f) = 0 \} \\
R_k^0 &\defeq \{ f \in R_k \mid \sch(f) = 0 \}.
\end{align*}

\subsection{The perfect case}
\label{ss:perfect}

In this subsection, we establish some further properties of the reduced Schur
multiplier $M(G,C)$ with the additional assumption that $G$ is perfect.
Not all of the properties require this hypothesis, but everything listed
is at least better motivated in that case.  We begin with the following
interpretation of $M(G,C)$ which is explained by Roberts and Venkatesh
\cite[Sec.~4B]{RV:hurwitz}. If $G$ is a perfect group, then it has a
canonical central extension
\[ M(G) \into \hG \onto G \]
called the \emph{Schur cover} $\hG$ of $G$.  In general the conjugacy
classes in $\hG$ can be larger than their counterparts in $G$, in the
sense that two preimages $g_1,g_2 \in \hG$ of one element $g \in G$ can be
conjugate to each other.   The reduced multiplier $M(G,C)$ is the finest
possible quotient of $M(G)$ such that in the corresponding central extension
\[ M(G,C) \into \tG \onto G, \]
two distinct preimages $c_1, c_2 \in \tG$ of $c \in C$ are never conjugate.
In other words, if $C' \subseteq \tC$ is any one conjugacy class
in the preimage $\tC$ of $C$, then $\tC$ decomposes as
\[ \tC = M(G,C) \cdot C', \]
where each $mC'$ with $m \in M(G,C)$ is a distinct conjugacy class.

\begin{lemma} If $C$ generates $G$ and $G$ is perfect, then
\[ \lim_{k \to \infty} \frac{\#R_k}{(\#C)^{2k}} = \frac{1}{\#G} \qquad
    \lim_{k \to \infty} \frac{\#R^0_k}{(\#C)^{2k}}
    = \frac{1}{\#G \cdot \#M(G,C)}. \]
\label{l:stochastic} \end{lemma}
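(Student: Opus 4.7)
The plan is to compute $\#\hR_k(G,C)$ by the Frobenius class-sum formula, bound the non-surjective contribution, and then reduce the Schur-zero count $\#R_k^0$ to a count on the Schur-type cover $\tG$ via a lifting bijection.

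Applied to the condition $c_{2k}^{-1}c_{2k-1}\cdots c_2^{-1}c_1 = 1$ defining $\hR_k$, the Frobenius formula (combined with $\chi(C^{-1}) = \overline{\chi(C)}$) gives
\[ \#\hR_k(G,C) = \frac{(\#C)^{2k}}{\#G}\sum_\chi \frac{|\chi(C)|^{2k}}{\chi(1)^{2k-2}}, \]
summed over complex irreducible characters $\chi$ of $G$. The trivial $\chi$ contributes $1$. For any nontrivial $\chi$, $|\chi(c)|=\chi(1)$ would force $\rho_\chi(c)$ to be scalar; since $C$ generates $G$ this would force the irreducible $\rho_\chi$ to be one-dimensional, impossible for perfect $G$. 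So $|\chi(c)|/\chi(1) < 1$ for every nontrivial $\chi$, and its contribution $\chi(1)^2(|\chi(c)|/\chi(1))^{2k}$ is exponentially small, yielding $\#\hR_k/(\#C)^{2k} \to 1/\#G$. For the surjectivity correction, a non-surjective $f \in \hR_k$ has image in some proper $H \lneq G$, in which case $f(\gamma_i) \in C \cap H$ for all $i$; since $C$ generates $G$ we have $\#(C \cap H) < \#C$. Summing the crude bound $(\#(C\cap H))^{2k}$ over the finitely many proper subgroups gives $\#\hR_k - \#R_k = O(\rho^{2k})$ for some $\rho < \#C$, establishing the first limit.

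For the second limit I would exploit the central extension $M(G,C) \hookrightarrow \tG \twoheadrightarrow G$ recalled at the start of \Sec{ss:perfect}. Since $\tilde C = M(G,C) \cdot C'$ is a disjoint union of $\tG$-conjugacy classes and $C' \to C$ is a bijection, every $f \in \hR_k(G,C)$ admits a unique lift $\tilde f \colon \pi_1(D^2 \setminus [2k]) \to \tG$ with $\tilde f(\gamma_i) \in C'$ for all $i$. The obstruction $\tilde f(\gamma_\infty) \in M(G,C)$ to extending $\tilde f$ across the puncture at infinity is precisely $\sch(f)$, by the standard identification of the Schur invariant with a central-extension obstruction (cf.\ \cite[Sec.~4B]{RV:hurwitz} together with \Prop{p:brand}). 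Hence $f \mapsto \tilde f$ restricts to a bijection $\hR_k^0(G,C) \cong \hR_k(\tG,C')$.

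To apply the first two paragraphs to $(\tG,C')$ I would check that $\tG$ is perfect and that $C'$ generates $\tG$. The former holds because $\tG$ is a quotient of the Schur cover $\hG$, which is perfect for perfect $G$. For the latter, the normal subgroup $\langle C' \rangle \normaleq \tG$ surjects onto $G$ (as $C$ does), so $\tG/\langle C' \rangle$ is a quotient both of the abelian group $M(G,C)$ and of the perfect group $\tG$, hence trivial. With $\#C' = \#C$ and $\#\tG = \#G \cdot \#M(G,C)$, this delivers $\#\hR_k^0/(\#C)^{2k} \to 1/(\#G \cdot \#M(G,C))$, and the same subgroup bound on non-surjective maps finishes the second limit. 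The main obstacle I expect is the rigorous identification $\tilde f(\gamma_\infty) = \sch(f)$; given \Prop{p:brand}(4), this should reduce to the standard comparison between the central-extension obstruction and the map $\beta\colon H_2(G)\to H_2(B(G,C))$, after which the remaining steps are routine character theory and subgroup counting.
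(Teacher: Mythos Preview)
Your proof is correct, and the second limit follows the paper's route essentially verbatim: lift to the central extension $\tG$, identify $\sch(f)$ with the obstruction $\tilde f(\gamma_\infty)$, and reduce to the first limit for $(\tG,C')$. Your checks that $\tG$ is perfect and that $C'$ generates $\tG$ are a bit more explicit than the paper's, but the content is the same, and the paper is no more rigorous than you are about the identification $\sch(f)=\tilde f(\gamma_\infty)$.

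For the first limit, however, you take a genuinely different path. The paper argues probabilistically: it views the partial product $g_k = c_{2k}^{-1}c_{2k-1}\cdots c_2^{-1}c_1$ as a random walk on $G$, writes down the doubly stochastic, bi-invariant transition matrix $M$, and invokes Perron--Frobenius to show $M^k$ converges to the uniform matrix (the perfectness of $G$ rules out a nontrivial block structure). Surjectivity is then handled by the qualitative observation that the image of $f_k$ is monotone increasing and almost surely reaches $G$. Your approach instead applies the Frobenius class-sum formula to get an exact expression for $\#\hR_k$ and reads off the limit from $|\chi(c)|<\chi(1)$ for nontrivial $\chi$, with surjectivity handled by the crude subgroup bound $\sum_{H\lneq G}(\#(C\cap H))^{2k}$. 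Your route is the more classical one in the Hurwitz-number literature and has the advantage of giving explicit exponential error terms in $k$; the paper's Markov-chain argument is softer but avoids character theory entirely. Either way the lemma follows.
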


We note Dunfield and Thurston proved a version of this lemma for maps
from fundamental groups of closed surfaces, instead of punctured disks.
Our limits are analogs of \cite[Lems.~6.10~\&~6.13]{DT:random}.

\begin{proof} For the first claim, we consider an infinite list
\[ c_1,c_2,c_3,\ldots \in C \]
of elements of $C$ chosen independently and uniformly at random.
The first $2k$ of these elements describe a homomorphism
\[ f_k:\pi_1(D^2 \setminus [2k]) \to G \]
with $f_k \in T_k$ following the conventions in \Sec{ss:actions}.
Then $f_k \in R_k$ when the product
\[ g_k \defeq c_{2k}^{-1} c_{2k-1} \ldots c_3 c_2^{-1} c_1 \]
equals 1, since $g_k = f_k(\gamma_\infty)$.  The first limit can thus be
restated as saying that the probability that $g_k = 1$ converges to $1/\#G$
as $k \to \infty$.  To argue this, we note the inductive relation
\[ g_k = c_{2k}^{-1}c_{2k-1}g_{k-1}, \]
and we let $M$ be the corresponding stochastic transition matrix,
independent of $k$, on probability distributions on $g_k$ drawn from
$G$. One can check these three properties of $M$:
\begin{enumerate}
\item[1.] $M$ commutes with both left and right multiplication by $G$.
\item[2.] $M$ is symmetric, $M = M^T$, and thus doubly stochastic.
\item[3.] Each diagonal entry of $M$ equals $1/\#C$.
\end{enumerate}

We apply the Perron-Frobenius theorem to the matrix $M$, in the doubly
stochastic case.  By this theorem, either $M^k$ converges to a constant
matrix as $k \to \infty$, or $G$ has a non-trivial equivalence relation
$\sim$ such that $M$ descends to a permutation on the quotient set $G/\sim$.
Since the diagonal of $M$ is entirely non-zero, this permutation must be
the identity.  Since $M$ commutes with both left and right multiplication
by every $g \in G$, the set quotient $G/\sim$ must be a group quotient
$G/N$ by some normal group $N \normaleq G$.  The conjugacy class $C$
descends to a conjugacy class $E$ which generates $G/N$.  Since $M$ acts
by the identity on $G/N$, $E$ must have a single element.  Thus $G/N$ would
be a cyclic group if it existed, contradicting that $G$ is perfect.

This establishes the first limit, except with a numerator of $\#\hR_k$
rather than $\#R_k$.  For the rest of the limit, observe that in the given
random process, the image of $f_k$ is monotonic, more precisely that it
almost surely increases to $G$ and stays there.  By contrast the condition
$g_k = 1$ is recurrent.  Therefore the limiting probability that $f_k \in
\hR_k$ is the same as the limiting probability that $f_k \in R_k$.

We reduce the second limit to the first one.  Recall that $\tG$ is the
central extension of $G$ by $M(G,C)$, and that $\tG$ is a perfect group
because the full Schur cover $\hG$ of the perfect group $G$ is perfect.
Let $C' \subseteq \tG$ be a conjugacy class that lifts $C$.  (Any such
lift generates $\tG$.)  Then $f \in R_k$ has a lift $f' \in R_k(\tG,C')$,
and we can recognize the Schur invariant $\sch(f)$ as
\[ \sch(f) = f'(\gamma_\infty), \]
independent of the choice of $C'$.  Therefore the second limit for the
group $G$ is equivalent to the first limit for the group $\tG$, as desired.
\end{proof}

The remaining properties concern the Cartesian power $(G^\ell,C^\ell)$
of $(G,C)$ and require some algebraic topology to state properly.
If we identify $B(G^\ell)$ with $B(G)^\ell$, then this identification
extends to a natural map
\[ \psi:B(G^\ell,C^\ell) \to B(G,C)^\ell\]
in the following way.  By the definition of $B(G,C)$, equation \eqref{e:brand},
we need to describe a map
\[ \psi_1:L_{C^\ell}B(G^\ell) \times D^2 \to (L_CB(G) \times D^2)^\ell \]
that commutes with the evaluation maps.  For this purpose, we let
$\psi$ be the product of two maps
\[ \psi_2:L_{C^\ell}B(G^\ell) \stackrel{\cong}\longto
L_CB(G)^\ell \qquad \Delta:D^2 \to (D^2)^\ell. \]
The map $\psi_2$ is another natural isomorphism, while $\Delta$ is the
diagonal embedding.  With these choices, $\psi_1 = \psi_2 \times \Delta$
commutes with the evaluation map, completing the construction of $\psi$.

\begin{lemma} Let $\ell > 0$ be an integer and assume that $C$
generates $G$ and $G$ is perfect.  Then:
\begin{enumerate}
\item $C^\ell$ generates $G^\ell$.
\item The K\"unneth theorem yields the isomorphism
\[ H_2(B(G,C)^\ell) \cong H_2(B(G,C))^\ell. \]
\item The map $\psi$ commutes with the natural equivalence
\[ \hR_k(G^\ell,C^\ell) \cong \hR_k(G,C)^\ell. \]
Using part 1, this equivalence also commutes with the Schur invariant:
\[ \sch((f_1,f_2,\ldots,f_\ell))
    = (\sch(f_1),\sch(f_2),\ldots,\sch(f_\ell)). \]
\item The induced map
\[ \psi_*:H_2(B(G^\ell,C^\ell)) \to H_2(B(G,C)^\ell)
    \cong H_2(B(G,C))^\ell \]
is injective.
\end{enumerate}
\label{l:kunneth} \end{lemma}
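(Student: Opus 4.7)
The plan is to take the four parts in order.  For part~1, let $H = \langle C^\ell \rangle \le G^\ell$.  Since $C^\ell$ is a single $G^\ell$-conjugacy class, $H$ is normal; projecting $H$ to any factor gives $\langle C \rangle = G$, so $H$ surjects onto every factor.  Because $G$ (and hence each factor of $G^\ell$) is perfect, \Lem{l:ribetn} forces $H = G^\ell$.  For part~2, \Prop{p:brand}.3 says $B(G,C)$ is simply connected, so $H_0(B(G,C)) \cong \Z$ and $H_1(B(G,C)) = 0$.  The K\"unneth formula applied to $B(G,C)^\ell$ then has no Tor contribution, and the only surviving summands are those with exactly one factor contributing $H_2$ and the rest contributing $H_0 \cong \Z$, giving the claimed isomorphism.

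For part~3, the bijection $\hR_k(G^\ell,C^\ell) \cong \hR_k(G,C)^\ell$ is the universal property of products together with the observation that $(c_1,\dots,c_\ell) \in C^\ell$ iff each $c_i \in C$.  The map $\psi$ is natural in $G$: by construction its restriction to the base $B(G^\ell) \subset B(G^\ell,C^\ell)$ is the canonical identification $B(G^\ell) \cong B(G)^\ell$, while the branching component uses the diagonal $\Delta:D^2 \to (D^2)^\ell$.  Hence if $f = (f_1,\dots,f_\ell)$ is classified by $\phi:S^2 \to B(G^\ell,C^\ell)$, each projection $\mathrm{pr}_i \circ \psi \circ \phi$ classifies $f_i$, so pushing $[S^2]$ forward through the isomorphism of part~2 gives the coordinatewise formula for $\sch$.

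For part~4, which is the real content, I will chase the ladder between the exact sequence of \Prop{p:brand}.4 for $(G^\ell,C^\ell)$ and the $\ell$-fold product of the sequence for $(G,C)$, with vertical arrows induced by $\psi$ and its $H_2$-level analogs.  On $Z(c)_\ab$ the vertical is the natural isomorphism $Z((c,\dots,c))_\ab \cong Z(c)_\ab^\ell$; on $H_2(G)$ it is the K\"unneth identification $H_2(G^\ell) \cong H_2(G)^\ell$ (valid since $H_1(G) = 0$); on $H_2(B(-,-))$ it is $\psi_*$; and on $\Z$ it is the diagonal $\Z \to \Z^\ell$.  If $\psi_*(x) = 0$, injectivity of the diagonal forces $\sigma(x) = 0$, so $x = \beta(y)$ for some $y \in H_2(G^\ell)$.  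The $\beta$-square then shows that each coordinate of $y$ under $H_2(G^\ell) \cong H_2(G)^\ell$ is killed by $\beta:H_2(G) \to H_2(B(G,C))$; exactness of the $(G,C)$ sequence places each coordinate in $\im(\kappa)$, so $y$ lies in the image of the leftmost map of the $(G^\ell,C^\ell)$ sequence, forcing $x = \beta(y) = 0$.  The main obstacle is verifying commutativity of this ladder, which reduces to unwinding the construction of $\psi$ from equation~\eqref{e:brand} and checking naturality of $\sigma$ and $\beta$ under the projections $G^\ell \to G$.
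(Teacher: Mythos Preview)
Your proposal is correct and follows essentially the same route as the paper: both use \Lem{l:ribetn} for part~1, simple connectivity of $B(G,C)$ for part~2, the explicit construction of the classifying map $\phi$ for part~3, and the commutative ladder built from the exact sequence of \Prop{p:brand}.4 for part~4.  The only cosmetic differences are that the paper cites the four lemma where you write out the diagram chase, and it spends a paragraph verifying commutativity of each square (invoking the Hurewicz theorem to check the $\sigma$-square via \Prop{p:brand}.5).
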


\begin{proof} Part 1: $C^\ell$ generates a normal subgroup $N \leq G^\ell$
that surjects onto each factor, so \Lem{l:ribetn} tells us that $N = G^\ell$.

Part 2: \Prop{p:brand}, part 3, says that $B(G,C)$ is simply connected
when $C$ generates $G$, in particular that
\[ H_1(B(G,C)) = 0. \]
Moreover, $H_0(B(G,C)) = \Z$ since $B(G,C)$ is connected.  Thus the
K\"unneth theorem simplifies to the stated isomorphism.

Part 3: The main step is to review the construction of a map
\[ \phi:S^2 \to B(G,C) \]
representing $f \in \hR_k(G)$, and to then relate $\phi$ to
the map $\psi$, as promised in \Prop{p:brand}.  Given
\[ f:\pi_1(S^2 \setminus [2k]) \to G, \]
we remove $2k$ open disks from $S^2$ instead of just $2k$ point punctures
to obtain a surface $S^2 \setminus kD^2$ with $k$ boundary circles
around the punctures.  We can define a map
\[ \phi:S^2 \setminus [2k] \to B(G) \]
using the map $f$, and then use a fiber $D^2$ from the attachment $L_CB(G)
\times D^2$ to extend $\phi$ across each puncture.

If we replace $(G,C)$ by $(G^\ell,C^\ell)$ in this construction, then it
commutes with $\phi$ because the same extension disk in $S^2$ is used $\ell$
times for each puncture.  Moreover, if $\psi_i$ is the $i$th component
of the map $\psi$, then the composition $\phi_i = \psi_i \circ \phi$
matches the $i$th component $f_i$ of $f$.  Together with the proof of
the K\"unneth formula and its use in part 1, this establishes that $i$th
component of $\sch(f)$ is $\sch(f_i)$, as desired.

Part 4: Let $c \in C$ and consider the diagram
\[ \begin{tikzcd}[column sep=scriptsize]
Z(c^{\times \ell})_\ab \arrow{r}{\kappa}\arrow{d}{\cong} & H_2(G^\ell)
    \arrow{r}{\beta}\arrow{d}{\cong} & H_2(B(G^\ell,C^\ell))
    \arrow{r}{\sigma}\arrow{d}{\psi_*} & \Z \arrow{d}{\Delta} \\
    Z(c)_\ab^\ell \arrow{r}{\kappa^{\times \ell}} & H_2(G)^\ell
    \arrow{r}{\beta^{\times \ell}} & H_2(B(G,C))^\ell
    \arrow{r}{\sigma^{\times \ell}} & \Z^\ell, \end{tikzcd} \]
where each row is taken from part 4 of \Prop{p:brand} and is thus exact.
Meanwhile the first vertical map is the elementary isomorphism from group
theory; the second map is from the K\"unneth theorem and is an isomorphism
because $G$ is perfect; the third map is as indicated; and the fourth is
the diagonal embedding of $\Z$ into $\Z^\ell$.

We claim that the diagram is commutative.  Working from the left, the first
square commutes by the definition of the map $\kappa$ \cite{Samperton:schur}.
Given any group homomorphism $f:A \times B \to G$, there is always
a bi-additive map
\[ f_{**}:A_\ab \times B_\ab \cong H_1(A)
    \times H_1(B) \to H_2(A \times B) \stackrel{f_*}{\longto} H_2(G), \]
where the middle arrow is the K\"unneth map.  The map $\kappa$ is a linear
restriction of $f_{**}$ where $A = Z(c)$ and $B = \langle c\rangle$.  It is
easy to confirm that $\kappa$ for the group $G^\ell$ does the same thing as
$\kappa^{\times \ell}$ for the group $G$.  The second square is commutative
by the way that $\psi$ is constructed: Since it is the identity on $B(G^\ell)
= B(G)^\ell$, $\beta$ and $\beta^{\times \ell}$ also do the same thing.
Finally the third square commutes because $\sigma$ and $\sigma^{\times \ell}$
do the same thing by part 5 of \Prop{p:brand}.  By the Hurewicz theorem,
we can represent any element of $H_2(B(G^\ell,C^\ell))$ by a map from $S^2$
and thus by a homomorphism
\[ f:\pi_1(\Sigma_{a,b}) \to G^\ell. \]
Splitting this homomorphism $f$ into $\ell$ homomorphisms to $G$, the
difference $a-b$ is replicated $\ell$ times.

To complete the proof, since the diagram is commutative, the four lemma
says that $\psi_*$ is injective.
\end{proof}

\subsection{An ultra transitivity theorem}

\Thm{th:evw} says that the action of $B_{k,k}$ on $R_k^0$ is transitive for
all $k$ large enough.  Our goal now is \Thm{th:refine}, which, among other
things, gives a complete description of this action when $G$ is nonabelian
simple.  The structure of our argument is similar to one direction of the
full monodromy theorem of Roberts-Venkatesh \cite[Thm.~5.1]{RV:hurwitz}.
However, \Thm{th:refine} refines this special case of Roberts-Venkatesh
in the same way that our prior result \cite[Thm.~5.1]{K:zombies} refines
a result of Dunfield-Thurston \cite[Thm.~7.4]{DT:random}.

From here to the end of this article, we choose an element $c \in C$
and we declare the abbreviations
\[ U \defeq \Aut(G,c) \subseteq \hU \defeq \Aut(G,C). \]
In this subsection we will only use $\hU$, but we will need the subgroup
$U$ soon enough.  The group $\hU$ acts on $\hR_k^0$ because we can compose
a homomorphism
\[ f:\pi_1(D^2 \setminus [2k]) \to G \]
with an element $\alpha \in \Aut(G,C)$.   It acts freely on the subset
$R_k^0$ because these homomorphisms are surjective.  Moreover, the actions
of $B_{k,k}$ and $\Aut(G,C)$ on $R_k^0$ commute.  In other words, the
corresponding permutation representation is a map
\[ \rho: B_{k,k} \to \Sym_{\hU}(R_k^0). \]

\begin{lemma} Let $G$ be a nonabelian simple group, let $C\subseteq G$
be a conjugacy class, and let $\ell > 0$.  Then $B_{k,k}$ eventually
(as $k \to \infty$) acts $\hU$-set $\ell$-transitively on $R_k^0$.
\label{l:trans} \end{lemma}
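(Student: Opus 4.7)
The plan is to lift the problem to the $\ell$-fold direct product $(G^\ell, C^\ell)$ and apply \Thm{th:evw} there. Given two ordered lists $(f_1, \ldots, f_\ell)$ and $(f_1', \ldots, f_\ell')$ of elements of $R_k^0$, with the elements of each list lying in pairwise distinct $\hU$-orbits, I assemble them into two product homomorphisms
\[ F \defeq (f_1, \ldots, f_\ell), \qquad F' \defeq (f_1', \ldots, f_\ell'):
    \pi_1(D^2 \setminus [2k]) \to G^\ell. \]
If I can produce a single braid $\beta \in B_{k,k}$ with $\beta \cdot F = F'$, then projection onto each factor gives $\beta \cdot f_i = f_i'$ for every $i$, which is exactly $\hU$-set $\ell$-transitivity.

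For the reduction to work I must first check that $F, F' \in R_k^0(G^\ell, C^\ell)$. Each sends every $\gamma_j$ into $C^\ell$ and kills $\gamma_\infty$ by the corresponding property of its components. Surjectivity of $F$ onto $G^\ell$ follows from Goursat-Ribet (\Lem{l:goursrib}) provided that no two factor maps $f_i, f_j$ ($i \ne j$) are related by an automorphism $\alpha \in \Aut(G)$. Such an $\alpha$ must send $f_i(\gamma_j)$ into $C$ for every $j$, and since $f_i$ is surjective, every element of $C$ is a $G$-conjugate of some $f_i(\gamma_j)$; conjugating upstairs and using that $C$ is a conjugacy class of $G$ then forces $\alpha(C) = C$, i.e.\ $\alpha \in \hU$, contradicting distinct $\hU$-orbits. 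Vanishing of the Schur invariant reduces to $\sch(f_i) = 0$ for all $i$ via \Lem{l:kunneth}: part~3 identifies $\psi_*(\sch(F)) = (\sch(f_1), \ldots, \sch(f_\ell))$ and part~4 gives the injectivity of $\psi_*: H_2(B(G^\ell, C^\ell)) \to H_2(B(G,C))^\ell$.

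With $F, F' \in R_k^0(G^\ell, C^\ell)$ in hand, I invoke \Thm{th:evw} for the finite group $G^\ell$ with the single generating conjugacy class $C^\ell$ (generation is \Lem{l:kunneth}(1); note $G^\ell$ is still perfect since $G$ is nonabelian simple). For all sufficiently large $k$ depending on $\ell$, the Schur invariant is a complete invariant for the $B_{k,k}$-orbits on $R_k(G^\ell, C^\ell)$, so the zero fiber $R_k^0(G^\ell, C^\ell)$ forms a single $B_{k,k}$-orbit, producing the desired $\beta$.

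The main conceptual point I expect to require care is the equivalence between ``distinct $\hU$-orbits of the $f_i$'' and the Goursat-Ribet hypothesis on the product map; this is the place where the conjugacy-class constraint on meridians is essential, and where the use of simplicity of $G$ enters most sharply. All remaining ingredients—the Künneth bookkeeping for the Schur invariant, surjectivity of $C^\ell$ onto $G^\ell$, and eventual transitivity for products—are already packaged in \Lem{l:kunneth} and \Thm{th:evw}.
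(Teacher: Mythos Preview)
Your proposal is correct and follows essentially the same route as the paper's proof: assemble the $\ell$ homomorphisms into a single map to $G^\ell$, verify membership in $R_k^0(G^\ell,C^\ell)$ via \Lem{l:goursrib} and \Lem{l:kunneth}, and then invoke \Thm{th:evw} for the pair $(G^\ell,C^\ell)$. You supply a bit more detail than the paper does on why an intertwining automorphism between two factor maps must lie in $\hU$, which is the only point the paper leaves implicit.
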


\begin{proof} We choose $k$ large enough so that the conclusion
of \Thm{th:evw} holds for the finite group $G^\ell$ and the conjugacy class
$C^\ell$.  Let
\[ f_1,f_2,\dots,f_\ell \in R_k^0 \]
lie in distinct $\hU$-orbits and consider the product homomorphism
\[ f = f_1 \times f_2 \times \dots \times f_\ell: \pi_1((D^2 \setminus
     [2k])^\ell) \to G^\ell. \]
By \Lem{l:goursrib}, $f$ is surjective.  Since $\sch(f_j) = 0$ for all
$j=1,\dots,\ell$, \Lem{l:kunneth} implies that $\sch(f)=0$.  If
\[ e_1,e_2,\dots,e_\ell \in R_k^0 \]
is another such list of homomorphisms with the same properties with product
$e$, then \Thm{th:evw} says $e$ and $f$ are in the same orbit of $B_{k,k}$,
as desired.
\end{proof}

Besides the map $\rho$ already defined, let
\[ \sigma_{J,E}:B_{k,k} \to \Sym(T(J,E)) \]
be the action map of the braid group for every finite group $J$
generated by a set of conjugacy classes $E \subseteq J$.  Also let
\[ \phi:B_{k,k} \to \Sym(k)^2 \]
be the forgetful map that only remembers the permutation of the braid
strands.

\begin{theorem} Let $G$ be a finite, nonabelian, simple group, and let $C
\subseteq G$ be a conjugacy class. Then the image of $B_{k,k}$ under the
joint homomorphism $\rho \times \sigma \times \phi$:
\begin{multline*}
\rho \times \phi \times \!\!\! \prod_{\substack{J \supseteq E \\ \#E < \#C}}
     \!\!\! \sigma_{J,E}:B_{k,k} \longto \\ \Sym_\hU(R_k^0) \times
     \Sym(k)^2 \times \!\!\! \prod_{\substack{J \supseteq E \\ \#E < \#C}}
     \!\!\! \Sym(T(J,E))
\end{multline*}
eventually contains $\Rub_\hU(R_k^0)$, where here each group $J$ is
generated by a union of conjugacy classes $E \subseteq J$.
\label{th:refine} \end{theorem}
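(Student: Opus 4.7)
The plan is to derive \Thm{th:refine} in two stages: first apply \Thm{th:rubik} to the action $\rho$ alone in order to obtain $\Rub_\hU(R_k^0) \subseteq \rho(B_{k,k})$, and then use \Lem{l:zorn} to show that the remaining factors of the joint map carry none of this structure, so that the image of $B_{k,k}$ under the full joint map contains $\Rub_\hU(R_k^0) \times \{1\}$.

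For the first stage, I would apply \Lem{l:trans} with $\ell=6$ to get that $\rho$ is eventually $\hU$-set $6$-transitive; this simultaneously provides the $\hU$-set $2$-transitivity and the $6$-transitivity of the induced action on $R_k^0/\hU$ required by \Thm{th:rubik}. \Lem{l:stochastic} shows $n := \#(R_k^0/\hU) \sim (\#C)^{2k}/(\#G \cdot \#\hU \cdot \#M(G,C))$, so $n \ge 7$ eventually; moreover, since $\hU$ is a fixed finite group, $\Alt(n-2)$ is not a quotient of $\hU$ for $n$ large. \Thm{th:rubik} then yields $\rho(B_{k,k}) \supseteq \Rub_\hU(R_k^0)$.

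For the second stage, set $B_0 := \rho^{-1}(\Rub_\hU(R_k^0))$ so that $\rho|_{B_0}$ surjects onto $\Rub_\hU(R_k^0)$, and write $p_2 := \phi \times \prod_{J,E} \sigma_{J,E}$. Since $\Rub_\hU(R_k^0)$ is finite (hence Zornian) and has $\Alt(n)$ as its only simple quotient by \Lem{l:only}, \Lem{l:zorn} reduces the theorem to showing that $\Alt(n)$ is not involved in $p_2(B_0)$ as a subquotient. For the size input I would use the exponential growth of $n$ from \Lem{l:stochastic}: eventually $n>k$, so $\Alt(n)$ (simple for $n\ge 5$) is not a subquotient of $\Sym(k)^2$; and for each $(J,E)$ with $\#E<\#C$, one has $\#T(J,E)=(\#E)^{2k}\le (\#C-1)^{2k}$, so $n/(\#C-1)^{2k}\to\infty$ and $\Alt(n)$ is not a subquotient of any individual $\Sym(T(J,E))$.

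The main obstacle is upgrading these per-factor bounds to the entire infinite product, since a priori a finite simple group can be a subquotient of an infinite product without being one of any single factor. I would handle this by the dichotomy used in \cite[Thm.~5.1]{K:zombies}: given a finitely generated $H\le p_2(B_0)$ with $H/N\cong \Alt(n)$, simplicity of $\Alt(n)$ forces, for each $(J,E)$, either $\ker(\pi_{J,E})|_H\le N$ (so $\Alt(n)$ is a quotient of $\pi_{J,E}(H)\le \Sym(T(J,E))$, violating the size bound) or $\ker(\pi_{J,E})|_H\cdot N=H$. Using that $p_2(B_0)$ is finitely generated and that the kernels $\ker(\pi_{J,E})|_H$ intersect trivially, one eliminates the second alternative (in the spirit of the proof of \Lem{l:zorn}), completing the check that $\Alt(n)$ is not a subquotient of $p_2(B_0)$ and hence the proof.
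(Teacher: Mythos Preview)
Your two-stage plan is exactly the paper's: \Lem{l:trans} plus \Thm{th:rubik} for $\Rub_\hU(R_k^0) \subseteq \rho(B_{k,k})$, then \Lem{l:zorn} and \Lem{l:only} to reduce the joint statement to ruling out $\Alt(n)$, $n = \#(R_k^0/\hU)$, as a subquotient of the remaining factor. You are also right to flag the product over $(J,E)$ as potentially infinite; the paper passes to individual factors by citing \Lem{l:prodquo}, which as stated only covers finite products.

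Your proposed repair, however, does not work. Finite generation of $H \le p_2(B_0)$ together with $\bigcap_{J,E}\ker(\pi_{J,E}|_H)=1$ does not force some $\ker(\pi_{J,E}|_H) \le N$. For a concrete obstruction: the free group $F_2$ is residually a finite $p$-group, so $F_2 \hookrightarrow \prod_i P_i$ with trivially intersecting projection kernels $K_i$; taking $N = \ker(F_2 \onto \Alt(5))$, one has $K_i \cdot N = F_2$ for every $i$ (a common quotient of a $p$-group and $\Alt(5)$ is trivial), so the ``second alternative'' holds universally while $F_2$ is finitely generated. The correct fix is to observe that the Hurwitz action of $B_{k,k}$ on $T_k(J,E) \cong E^{2k}$ depends only on the conjugation quandle of $E$, not on $J$; since there are only finitely many quandles of cardinality below $\#C$, the map $\prod_{(J,E)}\sigma_{J,E}$ factors through a finite subproduct indexed by quandle isomorphism types, after which your per-factor size estimate and \Lem{l:prodquo} apply. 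Alternatively, bound exponents directly: every element of the product has order dividing $\mathrm{lcm}(1,\ldots,(\#C-1)^{2k})$, while $\Alt(n)$ contains cycles of each odd prime length up to $n$, and $n$ eventually exceeds $(\#C-1)^{2k}$ by \Lem{l:stochastic}.
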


In other words, we can find a set of \emph{pure} braids that act by the
entire Rubik group on $R_k^0$, while simultaneously acting trivially on
every $T(J,E)$ with $\#E < \#C$.   It follows that such braids also act
trivially on the set $\hR_k \setminus R_k$ of non-surjective maps in $\hR_k$.

\begin{proof} Following Dunfield-Thurston \cite{DT:random} and
Roberts-Venkatesh \cite{RV:hurwitz}, we use the corollary of the
classification of finite simple groups that every 6-transitive permutation
group on a finite set is ultratransitive.  \Lem{l:trans} shows $B_{k,k}$
eventually acts $\Aut(G,C)$-set 6-transitively on $R_k^0$.  It follows
that $B_{k,k}$ eventually acts 6-transitively (in the usual sense)
on $R_k^0/\Aut(G,C)$.  By \Thm{th:rubik}, the image of $\rho$ contains
$\Rub_\hU(R_k^0)$.

For the rest of the properties of the joint homomorphism, \Lem{l:zorn}
tells us it is enough to show that $\Rub_\hU(R_k^0)$ does not have any simple
quotients that are subquotients of
\[ \Sym(k)^2 \times \!\!\!  \prod_{\substack{J \supseteq E \\ \#E < \#C}}
     \!\!\! \Sym(T(J,E)). \]
By Lemmas \ref{l:only} and \ref{l:prodquo}, it suffices to show that
$\Alt(R_k^0/\hU)$ is not a subquotient of $\Sym(T(J,E))$ for any group
$J$ generated by a union of conjugacy classes $E$, nor a subquotient of
$\Sym(k)$.  To prove this, we show that $\Alt(R_k^0/\hU)$ is eventually
larger than any of these other groups.  This follows from comparing
$\#T(J,E) = \#E^{2k}$ to the bound in \Lem{l:stochastic} that shows that
\[ \lim_{k \to \infty} (\#R_k^0)^{1/2k} = \#C. \]
Meanwhile $\Sym(k)$ by definition acts on a set that only grows linearly
in $k$, not exponentially.
\end{proof}

\begin{remark} Although our proof of \Thm{th:refine} (hence also our main
theorem) depends on the classification of finite simple groups via the
6-transitivity corollary, we conjecture that the classification can
be avoided.  The analogous step in our previous work \cite{K:zombies} is
a result of Dunfield and Thurston \cite[Thm.~7.4]{DT:random} that they
argue in the same way.   However, they point out that they could use a
non-classification result of Dixon and Mortimer \cite[Thm.~5.5B]{DM:perm},
which says that a permutation group on a finite set is ultratransitive
if it is both 2-transitive, and locally $\ell$-transitive on a
single subset of size $\ell = \Omega(\log k)$.  They show that this
transitivity theorem suffices (for mapping class groups of unmarked,
closed surfaces) when the Schur invariant vanishes thanks to a result of
Gilman \cite{Gilman:quotients}, but the argument can be extended to any
value of the Schur invariant.  It would suffice to find an analogue of
Gilman's theorem for braid groups.
\end{remark}

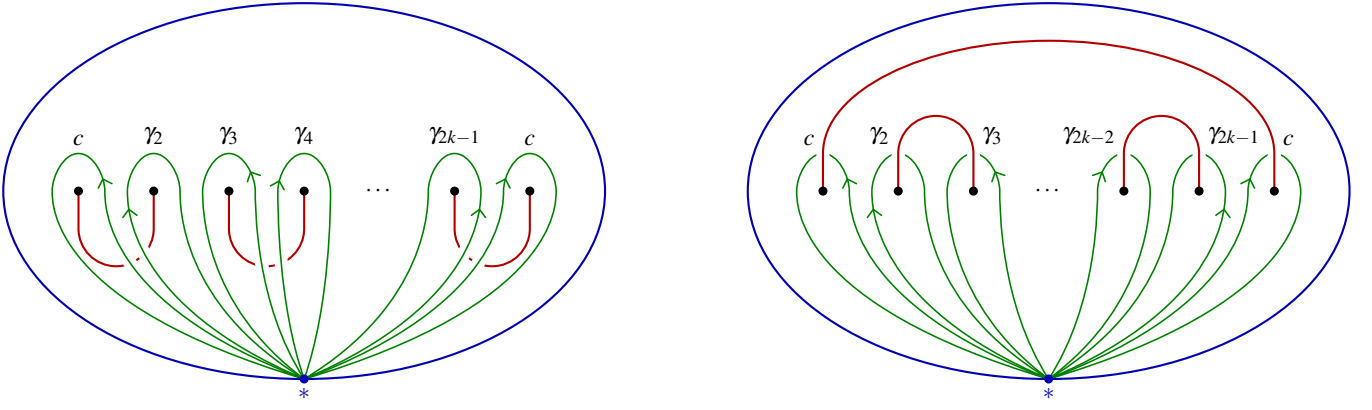
\begin{figure*}
\begin{center}
\begin{tikzpicture}[semithick,decoration={markings,
    mark=at position 0.42 with {\arrow{angle 90}}}]
\coordinate (*) at (0,-2.5);
\draw[darkred,thick] (-3,0) -- (-3,-.5) arc(180:360:0.5) -- (-2,0);
\draw[darkred,thick] (-1,0) -- (-1,-.5) arc(180:360:0.5) -- (0,0);
\draw[darkred,thick] (2,0) -- (2,-.5) arc(180:360:0.5) -- (3,0);
\foreach \i in {-3,-2,-1,0,2,3} {
	\fill (\i,0) circle[radius=0.06]; }
\foreach \i in {-3,-1,2} {
	\draw[white,line width=5] (*) .. controls (\i+.35,-1.5) and
        (\i+.35, -0.2) .. (\i+.35,0) .. controls (\i+.35,.2) and (\i+.2,.5)
        .. (\i,.5) .. controls (\i-.2,.5) and (\i-.35,.2) .. (\i-.35,0)
        .. controls (\i-.35,-.2) and (\i-.35,-1.5) .. (*); }
\foreach \i in {-2,0,3} {
	\draw[white,line width=5] (*) .. controls (\i-.35,-1.5) and
        (\i-.35, -0.2) .. (\i-.35,0) .. controls (\i-.35,.2) and (\i-.2,.5)
        .. (\i,.5) .. controls (\i+.2,.5) and (\i+.35,.2) .. (\i+.35,0)
        .. controls (\i+.35,-.2) and (\i+.35,-1.5) .. (*) ; }
\foreach \i in {-3,-1,2} {
	\draw[darkgreen,postaction={decorate}] (*) .. controls (\i+.35,-1.5) and
        (\i+.35, -0.2) .. (\i+.35,0) .. controls (\i+.35,.2) and (\i+.2,.5)
        .. (\i,.5) .. controls (\i-.2,.5) and (\i-.35,.2) .. (\i-.35,0)
        .. controls (\i-.35,-.2) and (\i-.35,-1.5) .. (*); }
\draw (-3,0.5) node[above] {$c$};
\draw (-1,0.5) node[above] {$\gamma_{3}$};
\draw (2,0.5) node[above] {$\gamma_{2k-1}$};
\foreach \i in {-2,0,3} {
	\draw[darkgreen,postaction={decorate}] (*) .. controls (\i-.35,-1.5) and
        (\i-.35, -0.2) .. (\i-.35,0) .. controls (\i-.35,.2) and (\i-.2,.5)
        .. (\i,.5) .. controls (\i+.2,.5) and (\i+.35,.2) .. (\i+.35,0)
        .. controls (\i+.35,-.2) and (\i+.35,-1.5) .. (*); }
\draw (-2,0.5) node[above] {$\gamma_{2}$};
\draw (0,0.5) node[above] {$\gamma_{4}$};
\draw (3,0.5) node[above] {$c$};	
\draw (1,0) node {$\cdots$};
\fill[darkblue] (*) circle[radius=0.06];
\draw[darkblue] (*) node[below] {$*$};
\draw[thick,darkblue] (0,0) circle [x radius=4,y radius=2.5];
\end{tikzpicture}
\hfill
\begin{tikzpicture}[semithick,decoration={markings,
    mark=at position 0.42 with {\arrow{angle 90}}}]
\coordinate (*) at (0,-2.5);
\foreach \i in {-3,-1,2} {
	\draw[darkgreen,postaction={decorate}] (*) .. controls (\i+.35,-1.5)
        and (\i+.35, -0.2) .. (\i+.35,0) .. controls (\i+.35,.2) and
        (\i+.2,.5) .. (\i,.5) .. controls (\i-.2,.5) and (\i-.35,.2)
        .. (\i-.35,0) .. controls (\i-.35,-.2) and (\i-.35,-1.5) .. (*) ; }
\draw (-3,0.5) node[above left] {$c$};
\draw (-1,0.5) node[above right] {$\gamma_{3}$};
\draw (2,0.5) node[above right] {$\gamma_{2k-1}$};
\foreach \i in {-2,1,3} {
	\draw[darkgreen,postaction={decorate}] (*) .. controls (\i-.35,-1.5)
        and (\i-.35, -0.2) .. (\i-.35,0) .. controls (\i-.35,.2) and
        (\i-.2,.5) .. (\i,.5) .. controls (\i+.2,.5) and (\i+.35,.2)
        .. (\i+.35,0) .. controls (\i+.35,-.2) and (\i+.35,-1.5) .. (*) ; }
\draw (-2,0.5) node[above left] {$\gamma_{2}$};
\draw (1,0.5) node[above left] {$\gamma_{2k-2}$};
\draw (3,0.5) node[above right] {$c$};	
\draw (0,0) node {$\cdots$};
\fill[darkblue] (*) circle[radius=0.06];
\draw[darkblue] (*) node[below] {$*$};
\draw[thick,darkblue] (0,0) circle [x radius=4,y radius=2.5];
\draw[thick,white,line width = 5] (-3,0) -- (-3,.5) arc
    (180:0:3 and 1.5) -- (3,0);
\draw[thick,white,line width = 5] (-2,0) -- (-2,.5) arc (180:0:0.5) -- (-1,0);
\draw[thick,white,line width = 5] (1,0) -- (1,.5) arc (180:0:0.5) -- (2,0);
\draw[darkred,thick] (-3,0) -- (-3,.5) arc (180:0:3 and 1.5) -- (3,0);
\draw[darkred,thick] (-2,0) -- (-2,.5) arc (180:0:0.5) -- (-1,0);
\draw[darkred,thick] (1,0) -- (1,.5) arc (180:0:0.5) -- (2,0);
\foreach \i in {-3,-2,-1,1,2,3} {
	\fill (\i,0) circle[radius=0.06]; }
\end{tikzpicture} \hfill \end{center}
\caption{These two sets of red plats impose the initial and final
    constraints, respectively.}
\label{f:plats} \end{figure*}

\section{Proof of \Thm{th:main} }
\label{s:proof}

As before, let $G$ be a non-abelian simple group, let $C \subseteq G$
be a non-trivial conjugacy class (which necessarily generates $G$), and
let $c \in C$ be a distinguished element.  Again, let
\[ U \defeq \Aut(G,c) \subseteq \Aut(G,C) \defeq \hU. \]
Also for this section, choose some fixed $k$ large enough for
the conclusions of \Thm{th:refine}.

\begin{remark} Although our proof of \Thm{th:main} is similar to that of
our previous result for mapping class groups \cite{K:zombies}, we will
face a new technical difficulty.  Namely, even though the available braid
actions are $\hU$-equivariant, the reduction from $\ZSAT$ is locally only
$U$-equivariant.  We will define a zombie symbol $z$ using the distinguished
element $c$, which is not a $\hU$-invariant choice.  If we represented
$z$ using all of $C$ or in any other $\hU$-invariant manner, then the
construction could only produce an intractable link invariant rather than
specifically a knot invariant.
\end{remark}

\subsection{Alphabets and gadgets}
\label{ss:alphagadget}

In this subsection, we will define a $U$-set alphabet $A$ with subsets $I,F
\subseteq A$ and a zombie symbol $z \in I \cap F$ such that $\ZSAT_{U,A,I,F}$
satisfies \Thm{th:zsat} and is thus almost parsimoniously $\shP$-complete,
for use to the end of this article.  Then we will define pure braid gadgets
that we will later use to replace gates in a $\ZSAT$ circuit. Here a
\emph{gadget} is a semi-rigorous concept in theoretical computer science,
by definition a local combinatorial replacement to implement a complexity
reduction.  Our gadget to replace one gate will be a braid with a fixed
number of strands.  We will later concatenate these braid gadgets to replace
an entire circuit with a braid with a linear number of strands.

Let the zombie symbol be
\[ z \defeq (c,c,\dots,c) \in \hR_k^0, \]
and let the alphabet be
\[ A \defeq \{z\} \cup \{ (g_1,g_2,\dots,g_{2k}) \in R^0_k
    \mid g_1 = g_{2k} = c \}. \]
\Ie, the non-zombie symbols in $A$ are surjections with trivial Schur
invariant such that the first and last punctures map to $c$ specifically.
The initialization and finalization conditions are specified by
restricting to homomorphisms that factor through the two trivial tangles
in \Fig{f:plats}, respectively.  Precisely, we define the initial and
final subalphabets by
\begin{align*}
I &\defeq \{ (g_1,g_2,\dots,g_{2k}) \in A \mid
    g_{2i} = g_{2i-1} \; \forall i \le k \} \\
F &\defeq \{ (g_1,g_2,\dots,g_{2k}) \in A \mid
    g_{2i} = g_{2i+1}\; \forall i \le k-1 \}
\end{align*}
It is straightforward to verify that $U$, $A$, $I$, and $F$ satisfy the
conditions of \Thm{th:zsat}.

We now construct braid gadgets that simulate gates in $\Rub_U(A^2)$.
Let $D^2 \setminus [4k]$ be a pointed disk with $4k$ punctures, and divide
it into two half-disks by a straight line, so that each half contains the
base point and half of the $4k$ punctures, as in \Fig{f:sum}.  This allows
us to identify $T_k \times T_k \cong C^{2k} \times C^{2k}$  with $T_{2k}
\cong C^{4k}$.  It is straightforward to verify that this identification
takes $R_k^0 \times R_k^0$ to a subset of $R_{2k}^0$.  In particular,
we identify $A^2$ with a subset of $R_{2k}^0 \cup \{(z,z)\}$.

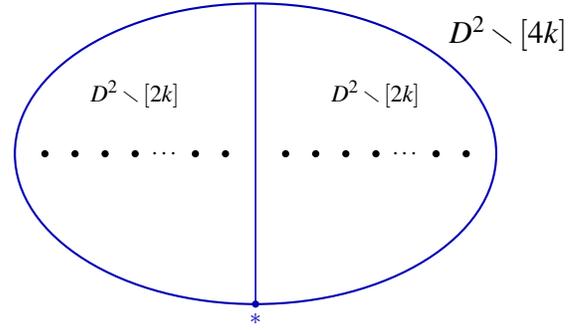
\begin{figure}
\begin{tikzpicture}[semithick,decoration={markings,
    mark=at position 0.42 with {\arrow{angle 90}}},scale=0.8]
\draw[thick,darkblue] (0,0) circle [x radius=4, y radius=2.5];
\coordinate (*) at (0,-2.5);
\fill[darkblue] (*) circle[radius=0.06];
\draw[darkblue] (*) node[below] {$*$};
\draw[darkblue] (*) -- (0,2.5);
\foreach \i in {-3.5,-3,-2.5,-2,-1,-.5} {
	\fill (\i,0) circle[radius=0.06]; }
\draw (-1.5,0) node {$\cdots$};
\foreach \i in {3.5,3,2,1.5,1,.5} {
	\fill (\i,0) circle[radius=0.06]; }
\draw (2.5,0) node {$\cdots$};
\draw (-2,1) node {$D^2 \setminus [2k]$};
\draw (2,1) node {$D^2 \setminus [2k]$};
\draw (4.2,2) node {\large $D^2 \setminus[4k]$};
\end{tikzpicture}
\caption{Splitting $D^2\setminus[4k]$ as a boundary sum of two copies
    of $D^2\setminus[2k]$.}
\label{f:sum}\end{figure}

\begin{corollary} For every gate $\delta \in \Rub_U(A^2)$, there is a
braid word $b(\delta)$, interpreted also as a braid element $b(\delta)
\in B_{2k,2k}$, with the following properties:
\begin{enumerate}
\item $b(\delta)$ acts on $A^2$ as $\delta$.
\item $b(\delta)$ acts trivially on $R_{2k}^0 \setminus \hU \cdot (A^2)$.
\item $b(\delta)$ acts trivially on $T_{2k}(J,E) \cong E^{4k}$ for every
group $J$ generated by a union of conjugacy classes $E$ with $\#E < \#C$.
\item $b(\delta)$ is a pure braid, \ie, $b(\delta) \in PB_{4k} \le B_{2k,2k}$.
\end{enumerate}
\label{c:gadget} \end{corollary}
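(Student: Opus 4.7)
The plan is to apply \Thm{th:refine} with $2k$ in place of $k$: that result says every $\tilde\delta \in \Rub_\hU(R_{2k}^0)$ is realized by some braid $b \in B_{2k,2k}$ with simultaneously trivial image under $\phi$ and under every $\sigma_{J,E}$ for $\#E < \#C$. Such a $b$ is automatically a pure braid (property~(4)) and acts trivially on every $T_{2k}(J,E)$ with $\#E < \#C$ (property~(3)). So the task reduces to constructing $\tilde\delta \in \Rub_\hU(R_{2k}^0)$ that agrees with $\delta$ on $A^2 \cap R_{2k}^0 = A^2 \setminus \{(z,z)\}$ and acts as the identity on $R_{2k}^0 \setminus \hU \cdot A^2$; then property~(1) for the remaining element $(z,z)$ is automatic, because the Hurwitz action of any braid on the constant tuple $(c,c,\dots,c)$ is trivial.

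The central combinatorial observation is that $U = \Aut(G,c)$ is precisely the $\hU$-stabilizer of the quadruple $(g_1,g_{2k},g_{2k+1},g_{4k}) = (c,c,c,c)$ that cuts $A^2 \setminus \{(z,z)\}$ out of $R_{2k}^0$. Since $z$ is the unique $U$-fixed point of $A$ by construction of the alphabet, $(z,z)$ is the unique $U$-fixed point of $A^2$, so every $\sigma \in \Sym_U(A^2)$ fixes it; in particular $\delta$ restricts to a $U$-equivariant permutation of $A^2 \setminus \{(z,z)\}$. The stabilizer statement then implies that each $\hU$-orbit in $\hU \cdot (A^2 \setminus \{(z,z)\})$ meets $A^2 \setminus \{(z,z)\}$ in exactly one $U$-orbit. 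Hence the restricted $\delta$ extends uniquely and $\hU$-equivariantly to $\hU \cdot (A^2 \setminus \{(z,z)\})$, and then by the identity outside, to some $\tilde\delta \in \Sym_\hU(R_{2k}^0)$.

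To confirm $\tilde\delta \in \Rub_\hU(R_{2k}^0) = [\Sym_\hU(R_{2k}^0), \Sym_\hU(R_{2k}^0)]$, I would write $\delta = \prod_i [\alpha_i, \beta_i]$ with $\alpha_i, \beta_i \in \Sym_U(A^2)$; each factor also fixes $(z,z)$ for the same reason as $\delta$ and extends to a $\hU$-equivariant permutation $\tilde\alpha_i, \tilde\beta_i \in \Sym_\hU(R_{2k}^0)$ by the very same construction, so $\tilde\delta = \prod_i [\tilde\alpha_i, \tilde\beta_i]$ lies in the commutator subgroup. Then \Thm{th:refine} produces the braid $b(\delta) \in B_{2k,2k}$, and the four corollary properties follow directly. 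The main obstacle is the orbit bookkeeping: one must reconcile that $\delta$ is only $U$-equivariant while braid actions are $\hU$-equivariant, and verify that the resulting extension lands in the commutator subgroup rather than merely in the full $\hU$-equivariant symmetric group.
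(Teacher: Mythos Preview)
Your proof is correct and takes the same approach as the paper, which simply states that the existence of $b(\delta)$ ``follows immediately from \Thm{th:refine}''.  You have carefully filled in the details the paper leaves implicit---the unique $\hU$-equivariant extension of $\delta$ from $A^2 \setminus \{(z,z)\}$ to $R_{2k}^0$ via the stabilizer identification $U = \Stab_{\hU}(c)$, the verification that this extension lands in $\Rub_\hU(R_{2k}^0)$ by lifting a commutator expression for $\delta$, and the separate handling of the zombie pair $(z,z)$.
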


The existence of $b(\delta)$ follows immediately from \Thm{th:refine}.
In fact, for each $\delta$ in $\Rub_U(A^2)$ there are infinitely many
braids that satisfy properties 1-4, but it is important for our reduction
that we fix some suitable $b(\delta)$ for each $\delta$.

Note that property 1 specifies the action of $b(\delta)$ on $A^2$,
but it implies more than that, because the action of $B_{2k,2k}$ is
$\hU$-equivariant while $A^2$ is only closed under the action of $U$.
This action has a unique $\hU$-equivariant extension to $\hU \cdot (A^2)$.
Meanwhile property 3 implies that $b(\delta)$ acts trivially on $\hR_{2k}
\setminus R_{2k}$, so together the first three properties specify all of
the action of $b(\delta)$ on $\hR_{2k}^0$.   However, $b(\delta)$ is not
fully specified on all of $\hR_{2k}$, because we place no restrictions on
its effect on $f \in R^s_{2k}$ with non-vanishing Schur invariant, $s \ne 0$.

We record as a lemma several invariance properties of $b(\delta)$ that we have
already discussed, either here or previously.

\begin{lemma} If $\delta \in \Rub_U(A^2)$, then $b(\delta) \in PB_{4k}$
acting on $\hR_{2k}$ preserves all of the sets
\[ \{(z,z)\} \subsetneq A^2 \subsetneq \hU \cdot (A^2) \subsetneq
    (\hU \cdot A)^2 \subsetneq (\hR_k^0)^2 \subsetneq \hR^0_{2k}
    \subsetneq \hR_{2k}. \]
\label{l:preserve}
\end{lemma}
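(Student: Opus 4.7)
The plan is to walk outside-in through the nested chain, combining the properties of $b(\delta)$ listed in \Cor{c:gadget} with the general invariance properties of the colored braid action. Three of the inclusions are essentially formal: $b(\delta) \in B_{2k,2k}$ acts on $\hR_{2k}$ by the setup of \Sec{ss:actions}; the Schur invariant is $B_{2k,2k}$-invariant (see \Sec{ss:CP}), so $\hR_{2k}^0 = \sch^{-1}(0)$ is preserved; and $A^2$ is preserved by property 1 of \Cor{c:gadget}.

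For $(\hR_k^0)^2$, viewed as a subset of $\hR_{2k}^0$ via boundary sum, I would split any element $(f_1,f_2)$ into two cases. If $(f_1,f_2)$ is non-surjective onto $G$, then it lies in $\hR_{2k} \setminus R_{2k}$, on which the remark following \Cor{c:gadget} says $b(\delta)$ acts trivially: such an $(f_1,f_2)$ factors through a proper subgroup $J \lneq G$, and its meridian image $E \subseteq J \cap C$ satisfies $\#E < \#C$, since $E = C$ would force $J \supseteq \langle C \rangle = G$, so property 3 applies. If $(f_1,f_2)$ is surjective, it lies in $R_{2k}^0$; either it is in $\hU \cdot (A^2)$ (preserved by the argument below) or it is fixed pointwise by property 2.

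For the two inner sets, the ``mixed'' elements of $(\hU \cdot A)^2$ not lying in $\hU \cdot (A^2)$ are themselves surjective (since each factor surjects onto $G$, so does the boundary sum) and have vanishing Schur invariant by \Lem{l:schprops}(1); they therefore lie in $R_{2k}^0 \setminus \hU \cdot (A^2)$ and are fixed pointwise by property 2 of \Cor{c:gadget}. The set $\hU \cdot (A^2)$ is itself preserved because the braid action commutes with the $\hU$-action on $\hR_{2k}$ (\Sec{ss:actions}) and $b(\delta)$ preserves $A^2$: for $\alpha \in \hU$ and $x \in A^2$, $b(\delta) \cdot (\alpha \cdot x) = \alpha \cdot (b(\delta) \cdot x) \in \alpha \cdot A^2$.

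Finally, the zombie singleton: $z = (c,c,\dots,c)$ is the unique $U$-fixed point of $A$ (because $U = \Aut(G,c)$ fixes $c$ and acts freely on $A \setminus \{z\}$ by the $\ZSAT$ setup), so $(z,z)$ is the unique singleton orbit of the diagonal $U$-action on $A^2$. Since every $\delta \in \Rub_U(A^2) \leq \Sym_U(A^2)$ permutes $U$-orbits of $A^2$, it must fix the unique singleton, and property 1 transfers this to $b(\delta)$. The main delicacy in the whole argument is the case analysis for $(\hR_k^0)^2$, where I combine property 2 (trivial action on $R_{2k}^0 \setminus \hU \cdot (A^2)$) with the consequence of property 3 that $b(\delta)$ is trivial on non-surjective maps; once this is cleanly handled the remaining inclusions are bookkeeping.
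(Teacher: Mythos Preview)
Your overall strategy is sound and matches the paper, which does not spell out a proof at all but simply records the lemma as a consequence of the properties already established in \Cor{c:gadget} and the surrounding discussion. However, there is one incorrect step in your treatment of $(\hU \cdot A)^2$: you assert that every element of $(\hU \cdot A)^2 \setminus \hU \cdot (A^2)$ is surjective because ``each factor surjects onto $G$''. This fails because $A$ contains the zombie symbol $z$, which is not surjective. Concretely, if $\alpha \in \hU \setminus U$ then $(z,\alpha \cdot z) \in (\hU \cdot A)^2 \setminus \hU \cdot (A^2)$, yet neither factor surjects onto $G$, and the boundary sum has image $\langle c,\alpha(c)\rangle$, which can certainly be a proper subgroup of $G$.

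The repair is immediate and already present in your own argument: the case analysis you gave for $(\hR_k^0)^2$ shows that every element of $(\hR_k^0)^2$ is either in $\hU \cdot (A^2)$, and hence carried into $\hU \cdot (A^2) \subseteq (\hU \cdot A)^2$, or else fixed pointwise by $b(\delta)$ (via property~2 of \Cor{c:gadget} if surjective, via property~3 if not). Since $(\hU \cdot A)^2 \subseteq (\hR_k^0)^2$, this already yields preservation of $(\hU \cdot A)^2$ without any appeal to surjectivity of the individual factors. Once you drop the faulty sentence and observe this, the proof is complete.
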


Note that it is easy to confuse the set $A^2$ with the slightly larger
$\hU \cdot (A^2)$ and $(\hU \cdot A)^2$, and the set $(\hR_k^0)^2$ with
the slightly larger $\hR^0_{2k}$.  In the proof, it will be crucial that
each $b(\delta)$ preserves both $A^2$ and $(\hR_k^0)^2$.

\begin{figure*}[t]
\begin{tikzpicture}[decoration={markings,
    mark=at position 0.5 with {\arrow{angle 90}}}]
\coordinate (*) at (0,-2.5);
\foreach \i in {-5.4,-4.8,-3.6,-2.4,-1.8,-.6,3.6,4.2,5.4} {
	\fill (\i,0) circle[radius=0.06]; }
\foreach \i in {-4.2,-1.2,4.8} {
	\draw (\i,0) node {$\cdots$}; }
\draw[darkgreen,postaction={decorate}] (*) .. controls (-5.4+.25,-1.6) and
    (-5.4+.25,-.15) .. (-5.4+.25,0) .. controls (-5.4+.25,.15) and
    (-5.4+.15,.25) .. (-5.4,.25) .. controls (-5.4-.15,.25) and
    (-5.4-.25,.15) .. (-5.4-.25,0) .. controls (-5.4-.25,-.15) and
    (-5.4-.25,-1.5) .. (*);
\draw[darkgreen,postaction={decorate}] (*) .. controls (-2.4+.25,-.75) and
    (-2.4+.25,-.15) .. (-2.4+.25,0) .. controls (-2.4+.25,.15) and
    (-2.4+.15,.25) .. (-2.4,.25) .. controls (-2.4-.15,.25) and
    (-2.4-.25,.15) .. (-2.4-.25,0) .. controls (-2.4-.25,-.2) and
    (-2.4,-.5) .. (*);
\draw[darkgreen,postaction={decorate}] (*) .. controls (3.6+.25,-.5) and
    (3.6+.25,-.1) .. (3.6+.25,0) .. controls (3.6+.25,.15) and
    (3.6+.15,.25) .. (3.6,.25) .. controls (3.6-.15,.25) and
    (3.6-.25,.15) .. (3.6-.25,0) .. controls (3.6-.25,-.2) and
    (3.6-.25,-.7) .. (*);
\draw[darkgreen,postaction={decorate}] (*) .. controls (-4.8-.25,-1) and
    (-4.8-.25,-.1) .. (-4.8-.25,0) .. controls (-4.8-.25,.15) and
    (-4.8-.15,.25) .. (-4.8,.25) .. controls (-4.8+.15,.25) and
    (-4.8+.25,.15) .. (-4.8+.25,0) .. controls (-4.8+.25,-.5) and
    (-4.8+.25,-.85) .. (*);
\draw[darkgreen,postaction={decorate}] (*) .. controls (-3.6-.25,-.5) and
    (-3.6-.25,-.1) .. (-3.6-.25,0) .. controls (-3.6-.25,.15) and
    (-3.6-.15,.25) .. (-3.6,.25) .. controls (-3.6+.15,.25) and
    (-3.6+.25,.15) .. (-3.6+.25,0) .. controls (-3.6+.25,-.2) and
    (-3.6+.3,-.7) .. (*);
\draw[darkgreen,postaction={decorate}] (*) .. controls (-1.8-.25,-.5) and
    (-1.8-.25,-.1) .. (-1.8-.25,0) .. controls (-1.8-.25,.15) and
    (-1.8-.15,.25) .. (-1.8,.25) .. controls (-1.8+.15,.25) and
    (-1.8+.25,.15) .. (-1.8+.25,0) .. controls (-1.8+.25,-.8) and
    (-1.8+.25,-.9) .. (*);
\draw[darkgreen,postaction={decorate}] (*) .. controls (-.6-.25,-.5) and
    (-.6-.25,-.1) .. (-.6-.25,0) .. controls (-.6-.25,.15) and
    (-.6-.15,.25) .. (-.6,.25) .. controls (-.6+.15,.25) and
    (-.6+.25,.15) .. (-.6+.25,0) .. controls (-.6+.25,-1) and
    (-.6+.25,-1.5) .. (*);
\draw[darkgreen,postaction={decorate}] (*) .. controls (4.2-.1,-1) and
    (4.2-.25,-.15) .. (4.2-.25,0) .. controls (4.2-.25,.15) and
    (4.2-.15,.25) .. (4.2,.25) .. controls (4.2+.15,.25) and
    (4.2+.25,.15) .. (4.2+.25,0) .. controls (4.2+.25,-.5) and
    (4.2+.25,-.75) .. (*);
\draw[darkgreen,postaction={decorate}] (*) .. controls (5.4-.25,-1) and
    (5.4-.25,-.15) .. (5.4-.25,0) .. controls (5.4-.25,.15) and
    (5.4-.15,.25) .. (5.4,.25) .. controls (5.4+.15,.25) and
    (5.4+.25,.15) .. (5.4+.25,0) .. controls (5.4+.25,-.15) and
    (5.4+.25,-1) .. (*);
\draw[thick,darkblue] (0,0) circle [x radius=6, y radius=2.5];
\fill[darkblue] (*) circle[radius=0.06];
\draw[darkblue] (*) node[below] {$*$};
\draw[semithick,darkblue] (*) -- (0,2.5);
\draw[semithick,darkblue] (*) -- (-3,0) -- (-3,2.1651);
\draw[semithick,darkblue] (*) -- (3,0) -- (3,2.1651);
\draw (1.5,.5) node {\Large $\cdots$};
\draw (-5.4,.2) node[above] {$\gamma_{1,1}$};
\draw (-4.8,.2) node[above] {$\gamma_{2,1}$};
\draw (-3.6,.2) node[above] {$\gamma_{2k,1}$};
\draw (-2.4,.2) node[above] {$\gamma_{1,2}$};
\draw (-1.8,.2) node[above] {$\gamma_{2,2}$};
\draw (-.6,.2) node[above] {$\gamma_{2k,2}$};
\draw (3.6,.2) node[above] {$\gamma_{1,n}$};
\draw (4.2,.2) node[above] {$\gamma_{2,n}$};
\draw (5.4,.2) node[above] {$\gamma_{2k,n}$};
\draw (-4.1,1.1) node {$(D^2\setminus[2k])_1$};
\draw (-1.5,1.5) node {$(D^2\setminus[2k])_2$};
\draw (4.1,1.1) node {$(D^2\setminus[2k])_n$};
\draw (5.4,2) node {\large $D^2 \setminus[2kn]$};
\end{tikzpicture}
\caption{Punctured disks that encode $n$ symbols of a $\ZSAT$ circuit.}
\label{f:disks} \end{figure*}
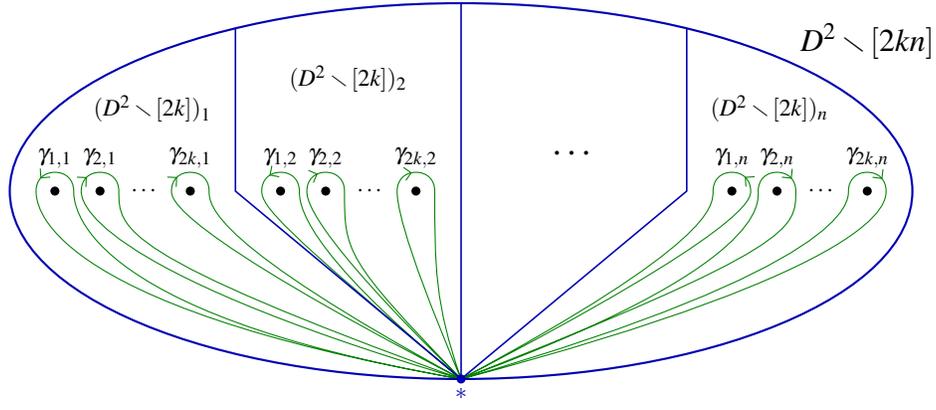

\subsection{The reduction}
\label{ss:reduction}

Let $Z$ be an instance of $\ZSAT_{U,A,I,F}$, with $U, A, I, F$ as in
\Sec{ss:alphagadget}.  Recall this means $Z$ is a planar $U$-equivariant
reversible circuit over the alphabet $A$.  Suppose that $Z$ has width $n$, so
that it acts on $n$ symbols; and length $\ell$, so that it has $\ell$ gates.

Consider the disk $D^2 \setminus [2kn]$ with $2kn$ punctures and a separate
base point $* \in \partial D^2$.  Divide it into $n$ disks $(D^2 \setminus
[2k])_i$ with $1 \le i \le n$ so that each one contains the base point,
as indicated in \Fig{f:disks}.  Also pick generators $\{\gamma_{j,i}\}$
for each $\pi_1((D^2 \setminus [2k])_i)$ as indicated in the figure, where
$1 \le j \le 2k$.

As in \Fig{f:reduction}, we convert $Z$ into a braid diagram $b(Z)$ by
replacing each strand in $Z$ with $2k$ parallel strands and each gate
$\delta^{(m)}$ in $Z$ with the braid gadget $b(\delta^{(m)})$, where here
$1 \le m \le \ell$.  Let $K(Z)$ be the oriented link diagram formed by the
plat closure of $b(Z)$ indicated in the figure, and for each $m$ with $0
\le m \le \ell$, let $(D^2 \setminus [2kn])^{(m)}$ be a disk transverse to
the braid, so that these disks and the braid gadgets alternate.  Each disk
$(D^2 \setminus [2kn])^{(m)}$ is also divided into subdisks $\{(D^2 \setminus
[2kn])^{(m)}_i\}$ as before, with loops $\{\gamma^{(m)}_{j,i}\}$.  Finally
let $\gamma_0 \in \pi_1(S^3\setminus K(Z))$ be the indicated meridian.
In other words, $\gamma_0 = \gamma^{(0)}_{1,1}$.

We are interested in homomorphisms
\[ f:\pi_1(S^3 \setminus K(Z)) \to G \]
such that $f(\gamma_0) = c$.  Using the system of disks and loops just
defined, we can restrict $f$ to other maps and elements as follows:
\begin{align*}
f^{(m)}&:\pi_1((D^2 \setminus [2kn])^{(m)}) \to G \\
f^{(m)}_i&:\pi_1((D^2 \setminus [2k])^{(m)}_i) \to G \\
f^{(m)}_{j,i}&= f(\gamma^{(m)}_{j,i}) \in C.
\end{align*}
We can also write $f^{(m)}_i \in T_k \cong C^{2k}$, and we can think of
the map $f^{(m)}_i$ as a list of the group elements $(f^{(m)}_{j,i})_{j=1}^k$.
For simplicity we rename the first and last levels of $f$:
\[ p \defeq f^{(0)} \qquad q \defeq f^{(\ell)}. \]
The inclusion map
\[ \imath_*:\pi_1((D^2 \setminus [2kn])^{(0)}) \to \pi_1(S^3 \setminus K(Z)) \]
is always a surjection and never a bijection.   Our goal is to show that a
map $p$ from the former extends to a map $f$ from the latter if and only
if $p$ corresponds to a solution to the circuit $Z$ with $q = Z(p)$.
(Moreover, that there are no non-trivial solutions if we replace $G$
with a group $J$ generated by a smaller conjugacy class.)

\begin{lemma} Let $Z$ be an instance of $\ZSAT_{U,A,I,F}$ and let $\#Z$
denote the number of solutions to $Z$.  Then the diagram $K(Z)$ and meridian
$\gamma_0$ have the following properties:
\begin{enumerate}
\item $K(Z)$ is a knot.
\item If $J$ is a non-cyclic group generated by a conjugacy class $E$
with $\#E < \#C$, then $\#Q(K(Z);J,E) = 0$.
\item $\#H(K(Z),\gamma_0;G,c) = \#Z$.
\end{enumerate}
\end{lemma}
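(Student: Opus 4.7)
The plan is to prove the three claims in order. For Claim 1, since each gadget $b(\delta^{(m)})$ is a pure braid by Corollary~\ref{c:gadget}(4), the composite $b(Z)$ does not permute strands, so the component structure of $K(Z)$ depends only on the plat pairings: the bottom plats pair $(2j-1, 2j)$ inside each symbol, and the top plats pair $(2j, 2j+1)$ inside each symbol together with the cross-symbol arcs $(2k, i) \leftrightarrow (1, i+1)$ and the outer arc $(1, 1) \leftrightarrow (2k, n)$. Starting from strand $(1, 1)$ and alternating between bottom and top, the trace visits every strand in a single cycle, so $K(Z)$ is a knot.

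For Claim 2, suppose $f: \pi_1(S^3 \setminus K(Z)) \onto J$ has meridians in $E$ with $J$ non-cyclic and $\#E < \#C$. By Corollary~\ref{c:gadget}(3), each gadget acts trivially on $T_{2k}(J, E)$, so the tuples $f^{(m)}$ agree across all levels. Then $p = q$, and the combined bottom plat equalities $p_{2j-1,i} = p_{2j,i}$ with the top plat equalities (inner pairs, cross-symbol arcs, and the outer arc) force every meridian to take a single value $e \in E$, making the image cyclic---a contradiction.

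For Claim 3, I split on $\text{Im}(f)$. If the image is a proper subgroup, then $E = C \cap \text{Im}(f)$ satisfies $\#E < \#C$, and the Claim 2 argument combined with $f(\gamma_0) = c$ collapses all meridians to $c$, producing the unique $f$ corresponding to the all-zombie solution $(z, \dots, z)$. If instead $f$ is surjective onto $G$, then $f^{(0)}$ bounds the bottom plat, so $\sch(f^{(0)}) = 0$ by Lemma~\ref{l:schprops}(2); colored braids preserve $\sch$, so $f^{(m)} \in R_{2kn}^0$ at every level.

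To complete the surjective case, I would show $(p_1, \dots, p_n) \in A^n$ and that it is a non-trivial $Z$-solution. Properties 2 and 3 of the gadget rule out each relevant pair $(p_i, p_{i+1})$ lying outside $\hU \cdot (A^2)$: otherwise the corresponding gadget acts trivially and the same plat collapse as in Claim 2 forces cyclic image, contradicting surjectivity. So each such pair lies in $\hU \cdot (A^2)$, and the constraint $p_{1,1} = c$ forces the $\hU$-twist on $p_1$ to lie in $U = \Aut(G, c)$, giving $p_1 \in A$. The top plat's cross-symbol and outer arcs then inductively propagate this along $i = 1, 2, \dots, n$ via the $\hU$-equivariance of the gadget action, yielding $(p_1, \dots, p_n) \in A^n$. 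Property~1 then says the gadgets implement $\delta^{(m)}$, so $q = Z(p)$, and the plats give $p \in I^n$ and $q \in F^n$. The reverse direction (building $f$ from any $Z$-solution by iterating the gadget action) is immediate. The main obstacle is this twist-propagation argument, which must carefully combine the $\hU$-equivariance of the braid action with the combinatorics of the cross-symbol and outer plat arcs and the single constraint $p_{1,1} = c$ to force each $p_i$ individually into $A$ rather than merely $\hU \cdot A$.
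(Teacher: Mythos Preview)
Your treatment of Parts 1 and 2 is essentially the same as the paper's and is fine.  Part 3 has two genuine gaps.

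First, the per-pair argument ``otherwise the corresponding gadget acts trivially and the same plat collapse as in Claim 2 forces cyclic image'' does not work.  If a single pair $(f^{(m)}_i,f^{(m)}_{i+1})$ lies outside $\hU\cdot(A^2)$, then yes that one gadget fixes that pair, but other gadgets may still act nontrivially on other pairs at other levels, so you cannot invoke the global collapse from Claim~2.  The paper instead argues per symbol: it first shows each $p_i\in\hR_k^0$ individually (because the bottom plats cap each symbol separately, not just the whole disk), and then observes that if some $p_i\notin\hU\cdot A$, then \emph{every} pair containing that symbol lies outside $\hU\cdot(A^2)\subseteq(\hU\cdot A)^2$, so no gadget ever moves symbol $i$, whence $q_i=p_i$ and the combined bottom/top plat relations on that single symbol force $p_i=(e,\ldots,e)\in\hU\cdot A$, a contradiction.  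This gives $f^{(m)}\in(\hU\cdot A)^n$ for all $m$.

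Second, your propagation step conflates the $p$ and $q$ levels.  The cross-symbol and outer arcs of the top plat constrain $q=f^{(\ell)}$, not $p=f^{(0)}$, so you cannot use them to push ``$p_1\in A$'' across to $p_2,\ldots,p_n$ directly.  The paper's route is: once $f^{(m)}\in(\hU\cdot A)^n$, each $q_i$ begins and ends with the same $e_i\in C$; the top-plat cross-symbol arcs force all $e_i$ equal, so $q\in\hU\cdot A^n$ (a single $\hU$-twist).  Because each gadget preserves $\hU\cdot(A^2)$ (Lemma~\ref{l:preserve}) and the action is $\hU$-equivariant, this single-twist condition propagates \emph{backwards} through the gadgets to give $p\in\hU\cdot A^n$.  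Only then does $p_{1,1}=c$ pin the twist to $U=\Aut(G,c)$ and yield $p\in A^n$.  Your outline has the right ingredients but applies them in the wrong order and at the wrong level.
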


\begin{proof} Part 1: Every braid gadget $b(\delta)$ as in \Cor{c:gadget}
is a pure braid, so our choice of plats in \Fig{f:reduction} guarantees
that $K(Z)$ is a knot, rather than a link with more than one component.

Part 2: Let $J$ be a group generated by a conjugacy class $E$ such that
$\#E < \#C$, and retain the notation $f$, $p$, and $q$ defined above for
the group $G$.  Following \Cor{c:gadget}, an arbitrary gadget $b(\delta)$
acts on $T_{2k}(J,E)$ by definition, and acts trivially by construction.
In particular each braid gadget $b(\delta^{(m)})$ acts on some pair
$(f^{(m-1)}_i,f^{(m-1)}_{i+1})$, and does nothing to that pair.  Thus for
the purpose of computing either $\#H(K(Z);J,E)$ or $\#Q(K(Z);J,E)$, $K(Z)$
is equivalent to the unknot.  Since by hypothesis $J$ is not cyclic,
we obtain $\#Q(K(Z);J,E) = 0$, as desired.

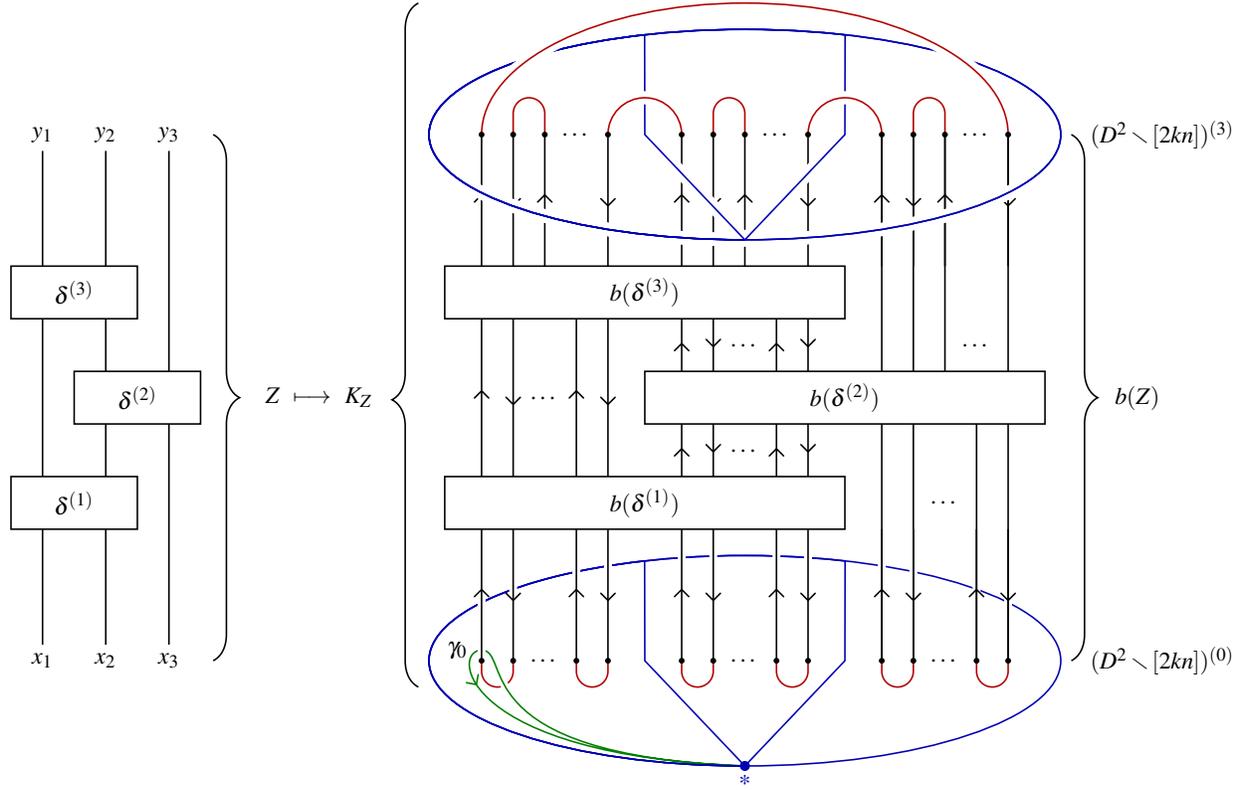
\begin{figure*}[t]
\begin{tikzpicture}[scale = 0.14,semithick,decoration={markings,
    mark=at position 0.55 with {\arrow{angle 90}}}]

\coordinate (*) at (0,0);

\draw[darkred] (3,10) -- (3,9) arc (180:360:1.5) -- (6,10);
\draw[darkred] (-3,10) -- (-3,9) arc (0:-180:1.5) -- (-6,10);
\draw[darkred] (3*7+1,10) -- (3*7+1,9) arc (180:360:1.5) -- (3*8+1,10);
\draw[darkred] (3*4+1,10) -- (3*4+1,9) arc (180:360:1.5) -- (3*5+1,10);
\draw[darkred] (-3*7-1,10) -- (-3*7-1,9) arc (0:-180:1.5) -- (-3*8-1,10);
\draw[darkred] (-3*4-1,10) -- (-3*4-1,9) arc (0:-180:1.5) -- (-3*5-1,10);

\draw[white,line width = 3] (*) .. controls (-27,1) and (-3*8+2,10+1)
    .. (-3*8-1,10+1) .. controls (-3*8-4,10+1) and (-27,1) .. (*);
\draw[darkgreen,postaction={decorate}] (*) .. controls (-27,1)
    and (-3*8+2,10+1) .. (-3*8-1,10+1) .. controls (-3*8-4,10+1) and
    (-27,1) .. (*);

\draw[darkblue] (*) arc (270:-90:30 and 10);
\draw[darkblue] (*) arc (270:108.5:30 and 10) -- (-9.5,10) -- (*);
\draw[darkblue] (*) arc (270:71.5:30 and 10) -- (9.5,10) -- (*);
\node[right] at (32,10) {$(D^2 \setminus [2kn])^{(0)}$};

\draw[fill,darkblue] (*) circle [radius = 0.4];
\node[below,darkblue] at (*) {*};

\foreach \i in {-2,1} {
	\coordinate (p) at (3*\i,10);
	\draw[white, line width = 3] (p) -- ($(p) + (0,12.5)$);
	\draw[postaction={decorate}] (p) -- ($(p) + (0,12.5)$);
	\draw[postaction={decorate}] ($(p) + (0,17.5)$) -- ($(p) + (0,22.5)$);
	\draw[postaction={decorate}] ($(p) + (0,27.5)$) -- ($(p) + (0,32.5)$);
	\draw[fill] (p) circle [radius = 0.2]; }

\foreach \i in {-1,2} {
	\coordinate (p) at (3*\i,10);
	\draw[white, line width = 3] (p) -- ($(p) + (0,12.5)$);
	\draw[postaction={decorate}] ($(p) + (0,12.5)$) -- (p);
	\draw[postaction={decorate}] ($(p) + (0,22.5)$) -- ($(p)+(0,17.5)$);
	\draw[postaction={decorate}] ($(p) + (0,32.5)$) -- ($(p)+(0,27.5)$);
	\draw[fill] (p) circle [radius = 0.2]; }

\foreach \i in {5,8} {
	\coordinate (q) at (-3*\i-1,10);
	\draw[white,line width = 3] (q) -- ($(q) + (0,12.5)$);
	\draw[postaction={decorate}] (q) -- ($(q) + (0,12.5)$);
	\draw[postaction={decorate}] ($(q)+(0,17.5)$) -- ($(q) + (0,32.5)$);
	\draw[fill] (q) circle [radius = 0.2]; }
\foreach \i in {4,7} {
	\coordinate (q) at (-3*\i-1,10);
	\draw[white,line width = 3] (q) -- ($(q) + (0,12.5)$);
	\draw[postaction={decorate}] ($(q) + (0,12.5)$)--(q);
	\draw[postaction={decorate}] ($(q) + (0,32.5)$) -- ($(q)+(0,17.5)$);
	\draw[fill] (q) circle [radius = 0.2]; }

\foreach \i in {4,7} {
	\coordinate (p) at (3*\i+1,10);
	\draw[white,line width = 3] (p) -- ($(p) + (0,35)$);
	\draw[postaction={decorate}] (p) -- ($(p) + (0,12.5)$);
	\draw (p) -- ($(p) + (0,22.5)$);
	\draw[fill] (p) circle [radius = 0.2]; }
\foreach \i in {5,8} {
	\coordinate (q) at (3*\i+1,10);
	\draw[white,line width = 3] (q) -- ($(q) + (0,12.5)$);
	\draw[postaction={decorate}] ($(q) + (0,12.5)$) --(q);
	\draw (q) -- ($(q) + (0,22.5)$);
	\draw[fill] (q) circle [radius = 0.2]; }

\node at (0,10) {  \dots};
\node at (3*6+1,10) { \dots};
\node at (-3*6-1,10) { \dots};
\node[left] at (-3*8-1.5,10+1) { $\gamma_0$};
\node at (-3*6-1,10+12.5+5+5+2.5) { \dots};
\node at (3*6+1,12.5+5+5+2.5) { \dots};
\node at (0,12.5+5+5+2.5+5) { \dots};
\node at (0,12.5+5+5+2.5+5+10) { \dots};

\foreach \i in {0,-2} {
	\coordinate (q) at (3*\i,60);
	\draw[postaction={decorate}] ($(q) - (0,12.5)$) -- (q);	}
\foreach \i in {-1,2} {
	\coordinate (q) at (3*\i,60);
	\draw[postaction={decorate}] (q) -- ($(q) - (0,12.5)$);	}
\node at (3,60) { \dots};
\foreach \i in {4,5,7,8} {
	\coordinate (q) at (3*\i+1,60);
	\draw[white, line width = 2] (q) -- ($(q) - (0,12.5+10)$); }
\foreach \i in {4,5,6,8} {
	\coordinate (q) at (3*\i+1,60);
	\draw (q) -- ($(q) - (0,12.5+10)$); }
\foreach \i in {5,8} {
	\coordinate (q) at (3*\i+1,60);
	\draw[postaction={decorate}] (q) -- ($(q) - (0,12.5)$); }
\foreach \i in {4,6} {
	\coordinate (q) at (3*\i+1,60);
	\draw[postaction={decorate}] ($(q) - (0,12.5)$) -- (q); }
\foreach \i in {4,7} {
	\coordinate (p) at (-3*\i-1,60);
	\draw[postaction={decorate}] (p) -- ($(p) - (0,12.5)$); }
\foreach \i in {6,8} {
	\coordinate (p) at (-3*\i-1,60);
	\draw[postaction={decorate}] ($(p) - (0,12.5)$) -- (p); }

\node at (3*7+1,40) { \dots};
\node at (3*7+1,60) { \dots};
\node at (-3*5-1,60) { \dots};

\draw[fill=white] (-28.5,22.5) rectangle (9.5,27.5);
\node at ($ (-28.5,22.5)!0.5!(9.5,27.5) $) { $b(\delta^{(1)})$};
\draw[fill=white] (28.5,32.5) rectangle (-9.5,37.5);
\node at ($ (28.5,32.5)!0.5!(-9.5,37.5) $) { $b(\delta^{(2)})$};
\draw[fill=white] (-28.5,42.5) rectangle (9.5,47.5);
\node at ($ (-28.5,42.5)!0.5!(9.5,47.5) $) { $b(\delta^{(3)})$};

\draw [decorate,decoration={brace,amplitude=10pt}] (31,60) -- (31,10)
    node [midway,xshift=25] {$b(Z)$};
\draw[mybrace=0.42] (-31,7.5) -- (-31,72.5) ;

\begin{scope}[xscale=0.6,xshift=-1000]
\draw [decorate,decoration={brace,amplitude=10pt}] (-49,60) -- (-49,10)
    node [midway,xshift=40] { $Z \hspace{3pt} \longmapsto \hspace{3pt} K_Z$};
\node (x3) at (-56,10) {$x_3$}; \node (x2) at (-66,10) {$x_2$};
\node (x1) at (-76,10) {$x_1$}; \node (y3) at (-56,60) {$y_3$};
\node (y2) at (-66,60) {$y_2$}; \node (y1) at (-76,60) {$y_1$};
\draw (x1) -- (y1); \draw (x2) -- (y2); \draw (x3) -- (y3);
\draw[fill=white] ($(x1)+(-5,12.5)$) rectangle ($(x2)+(5,17.5)$);
\node at ($(-81,22.5)!0.5!(-61,27.5)$) { $\delta^{(1)}$};
\draw[fill=white] ($(x2)+(-5,22.5)$) rectangle ($(x3)+(5,27.5)$);
\node at ($ (-71,32.5)!0.5!(-51,37.5) $) { $\delta^{(2)}$};
\draw[fill=white] ($(x1)+(-5,32.5)$) rectangle ($(x2)+(5,37.5)$);
\node at ($(-81,42.5)!0.5!(-61,47.5)$) { $\delta^{(3)}$};
\end{scope}

\begin{scope}[yshift=50cm]
\draw[white,line width=5] (0,0) arc (270:-90:30 and 10);
\draw[white,line width=5] (0,0) arc (270:-90:30 and 10);
\draw[white,line width=5] (0,0) arc (270:108.5:30 and 10) -- (-9.5,10) -- (0,0);
\draw[white,line width=5] (0,0) arc (270:71.5:30 and 10) -- (9.5,10) -- (0,0);
\draw[darkblue] (0,0) arc (270:-90:30 and 10);
\draw[darkblue] (0,0) arc (270:-90:30 and 10);
\draw[darkblue] (0,0) arc (270:108.5:30 and 10) -- (-9.5,10) -- (0,0);
\draw[darkblue] (0,0) arc (270:71.5:30 and 10) -- (9.5,10) -- (0,0);
\node[right] at (32,10) {$(D^2 \setminus [2kn])^{(3)}$};
\end{scope}

\draw[white,line width=5] (-25,60) arc (180:0:25 and 12.5);
\draw[white,line width=5] (-22,60) -- (-22,62) arc (180:0:1.5) -- (-19,60);
\draw[white,line width=5] (-13,60) arc (180:0:3.5);
\draw[white,line width=5] (-3,60) -- (-3,62) arc (180:0:1.5) -- (0,60);
\draw[white,line width=5] (13,60) arc (0:180:3.5);
\draw[white,line width=5] (16,60) -- (16,62) arc (180:0:1.5) -- (19,60);

\draw[darkred] (-25,60) arc (180:0:25 and 12.5);
\draw[darkred] (-22,60) -- (-22,62) arc (180:0:1.5) -- (-19,60);
\draw[darkred] (-13,60) arc (180:0:3.5);
\draw[darkred] (-3,60) -- (-3,62) arc (180:0:1.5) -- (0,60);
\draw[darkred] (13,60) arc (0:180:3.5);
\draw[darkred] (16,60) -- (16,62) arc (180:0:1.5) -- (19,60);

\draw[fill] (-25,60) circle [radius=.2];
\draw[fill] (-22,60) circle [radius=.2];
\draw[fill] (-19,60) circle [radius=.2];
\draw[fill] (-13,60) circle [radius=.2];
\draw[fill] (-6,60) circle [radius=.2];
\draw[fill] (-3,60) circle [radius=.2];
\draw[fill] (0,60) circle [radius=.2];
\draw[fill] (6,60) circle [radius=.2];
\draw[fill] (13,60) circle [radius=.2];
\draw[fill] (16,60) circle [radius=.2];
\draw[fill] (19,60) circle [radius=.2];
\draw[fill] (25,60) circle [radius=.2];
\end{tikzpicture}
\caption{Reducing a circuit $Z$ with $n=3$ variables to the knot $K_Z$.}
\label{f:reduction} \end{figure*}

Part 3: Let $X(Z)$ be the set of solutions to the circuit $Z$.  We will
show that $X(Z) = H(K(Z),\gamma_0;G,c)$ in the natural sense.  If $q =
Z(p)$ is a solution to $Z$, then by definition,
\[ (p_1,p_2,\dots,p_n) \in I^n \qquad Z(p) = (q_1,q_2,\dots,q_n) \in F^n. \]
By \Cor{c:gadget}, each braid gadget $b(\delta)$ acts on $A^2$ exactly as
$\delta$ does, and therefore the braid $b(Z)$ acts on $A^n$ exactly as
the circuit $Z$ does.  By the definition of the initial subalphabet $I$,
the map $p = f^{(0)}$ factors through the plat attached to the bottom of
the braid $b(Z)$.  Meanwhile, the definition of the alphabet $A$ together
with the definition of the final subalphabet $F$ together imply that $q
= b(Z) \cdot p$ factors through the plat attached to the top of $b(Z)$.
Most of the U-turns at the top of the plat are internal to one symbol $q_i
\in A$, and these force $q_i \in F$.  The others connect either $q_{2k,i}$
with $q_{1,i+1}$ or $q_{2k,n}$ with $q_{1,1}$.  These constraints hold
automatically in the alphabet $A$, because they reduce to the equation $c=c$.
Finally $p_1 \in A$ also gives us that $f(\gamma_0) = c$.  This establishes
that $X(Z) \subseteq H(K(Z),\gamma_0;G,c)$.  In fact it establishes
a little more, namely that any other element of $H(K(Z),\gamma_0;G,c)$
cannot come from $p = f^{(0)} \in A^n$.

Conversely, let $f \in H(K(Z),\gamma_0;G,c) \setminus X(Z)$ be a hypothetical
spurious homomorphism.   Then tautologically $p = f^{(0)} \in T_k^n \cong
(C^{2k})^n$, but we quickly obtain an important restriction.  Each $p_i$
factors through the initial plat attached to $(D^2 \setminus [2k])^{(0)}_i$,
so \Lem{l:schprops} tells us that $\sch(p_i) = 0$ and thus that $p_i \in
\hR_k^0$.  Moreover, \Lem{l:preserve} tells us that every braid gadget
preserves this condition, so $f^{(m)}_i \in \hR_k^0$ for every $m$ and $i$.
In other words, we can interpret $b(Z)$ as a circuit that uses the larger
alphabet $\hR_k^0 \supseteq A$.

We claim that we can further restrict the alphabet to $\hU \cdot A$.
If $p_i = f^{(0)}_i \in \R^0_k \setminus \hU \cdot A$ for some $i$, then
\Cor{c:gadget} also tells us that no gate gadget changes this value,
so that in particular $q_i = p_i$.   But then the initial and final
plat closures together tell us that
\[ p_i = (e,e,\dots,e) \]
for some $e \in C$, which thus means that
\[ p_i = q_i \in \Inn(G) \cdot \{z\} \subseteq \hU \cdot A \]
after all.  By \Lem{l:preserve}, the condition that $p = f^{(0)} \in
(\hU \cdot A)^n$ is also preserved through every gate gadget in $b(Z)$.

We now show that $f^{(m)} \in \hU \cdot A^n$ for every $m$.  The condition
that $f^{(m)}_i \in \hU \cdot A$ tells us that each symbol $f^{(m)}_i$
begins and ends with the same group element $e \in C$, and what we would
like to know is that $e$ does not depend on $i$.   The final plat closure
makes this immediate for $q = f^{(\ell)}$, and then \Lem{l:preserve} tells
us that the condition is preserved in reverse $f^{(m)}$ as $m$ decreases.

Finally, because $p = f^{(0)} \in \hU \cdot A^n$ and $f(\gamma_0) = c$,
we conclude that $p \in A^n$.
\end{proof}

To conclude the proof of \Thm{th:main}, the knot $K(Z)$ can be constructed
from $Z$ in polynomial time as a function of the number of gates in
$Z$, since it is just direct replacement of each gate $\delta$ by
the corresponding gate gadget $b(\delta)$.  Thus it is a parsimonious
reduction from $\#\ZSAT_{U,A,I,F}$ to $\#H(-,G,c)$ that preserves the
$\shP$-completeness properties state in \Thm{th:zsat}.

\begin{remark} As in our previous work \cite{K:zombies}, the
proof of \Thm{th:main} establishes an efficient bijection between
$Q(K(Z),\gamma_0;G,c)$ and the orbits of non-trivial solutions to
$\#\ZSAT_{U,A,I,F}$, and therefore the set of certificates in any problem
in $\shP$.  This is an even stronger property than parsimonious reduction
known as Levin reduction.
\end{remark}

\section{Open problems}

As with our previous theorem about homology 3-spheres \cite{K:zombies},
we conjecture that $\#Q(K;G,C)$ is also computationally intractable when
$K$ is a randomly chosen knot.  There are various inequivalent models for
choosing a knot at random \cite{Even:models}, and we believe that $\#Q(K;G,C)$
should be intractable for many of them.  Hardness in random cases is a
known property for some $\shP$-complete problems \cite{CPS:permanent}.

Also in our previous work, we first had in mind that the analogous invariant
$\#Q(M;G)$ is intractable for 3-manifolds $M$; later we sharpened the
construction to make $M$ a homology 3-sphere.  \Thm{th:main} is in keeping
with the analogy that a homology 3-sphere is like a knot, while a
general 3-manifold is like a link.   However, a deeper analogy is that a
homology 3-sphere, among 3-manifolds, is like a knot with trivial Alexander
polynomial, among knots.  We conjecture that \Thm{th:main} also holds for
knots with trivial Alexander polynomial.  This would better motivate the
restriction that $G$ should be a non-abelian simple group.


\begin{thebibliography}{10}

\bibitem{AHT:genus}
Ian Agol, Joel Hass, and William Thurston, \emph{The computational complexity
  of knot genus and spanning area}, Trans. Amer. Math. Soc. \textbf{358}
  (2006), no.~9, 3821--3850, {arXiv:math/0205057}.

\bibitem{AB:modern}
Sanjeev Arora and Boaz Barak, \emph{Computational complexity: a modern
  approach}, Cambridge University Press, Cambridge, 2009.

\bibitem{Barbanchon:unique}
{R\'egis} Barbanchon, \emph{On unique graph 3-colorability and parsimonious
  reductions in the plane}, Theoret. Comput. Sci. \textbf{319} (2004), no.~1-3,
  455--482.

\bibitem{BBBV:strengths}
Charles~H. Bennett, Ethan Bernstein, Gilles Brassard, and Umesh Vazirani,
  \emph{Strengths and weaknesses of quantum computing}, SIAM J. Comput.
  \textbf{26} (1997), no.~5, 1510--1523, {arXiv:quant-ph/9701001}.

\bibitem{Brand:branched}
Neal Brand, \emph{Classifying spaces for branched coverings}, Indiana Univ.
  Math. J. \textbf{29} (1980), no.~2, 229--248.

\bibitem{CPS:permanent}
Jin-Yi Cai, Aduri Pavan, and D.~Sivakumar, \emph{On the hardness of permanent},
  {STACS} 99 ({Trier}), Lecture Notes in Comput. Sci., vol. 1563, Springer,
  Berlin, 1999, pp.~90--99.

\bibitem{CF:knot}
Richard~H. Crowell and Ralph~H. Fox, \emph{Introduction to knot theory},
  Graduate Texts in Mathematics, vol.~57, Springer-Verlag, 1977, Reprint of the
  1963 original.

\bibitem{dHJ:graph}
Pierre de~la Harpe and Vaughan F.~R. Jones, \emph{Graph invariants related to
  statistical mechanical models: examples and problems}, J. Combin. Theory Ser.
  B \textbf{57} (1993), no.~2, 207--227.

\bibitem{dMRST:unbearable}
Arnaud de~Mesmay, Yo'av Rieck, Erick Sedgwick, and Martin Tancer, \emph{The
  unbearable hardness of unknotting}, 2018, {arXiv:1810.03502}.

\bibitem{DM:perm}
John~D. Dixon and Brian Mortimer, \emph{Permutation groups}, Graduate Texts in
  Mathematics, vol. 163, Springer-Verlag, 1996.

\bibitem{DT:random}
Nathan~M. Dunfield and William~P. Thurston, \emph{Finite covers of random
  3-manifolds}, Invent. Math. \textbf{166} (2006), no.~3, 457--521,
  {arXiv:math/0502567}.

\bibitem{EVW:hurwitz2}
Jordan~S. Ellenberg, Akshay Venkatesh, and Craig Westerland, \emph{Homological
  stability for {Hurwitz} spaces and the {Cohen-Lenstra} conjecture over
  function fields, {II}}, 2012, {arXiv:1212.0923}.

\bibitem{Even:models}
Chaim Even-Zohar, \emph{Models of random knots}, J. Appl. Comput. Topol.
  \textbf{1} (2017), no.~2, 263--296, {arXiv:1711.10470}.

\bibitem{Fox:quick}
Ralph~H. Fox, \emph{A quick trip through knot theory}, Topology of 3-manifolds
  and related topics ({Proc. The Univ. of Georgia Institute}, 1961),
  Prentice-Hall, 1962, pp.~120--167.

\bibitem{FV:inverse}
Michael~D. Fried and Helmut V{\"o}lklein, \emph{The inverse {Galois} problem
  and rational points on moduli spaces}, Math. Ann. \textbf{290} (1991), no.~4,
  771--800.

\bibitem{Gilman:quotients}
Robert Gilman, \emph{Finite quotients of the automorphism group of a free
  group}, Canad. J. Math \textbf{29} (1977), 541--551.

\bibitem{Goursat:divisions}
{\'Edouard}~Goursat, \emph{Sur les substitutions orthogonales et les divisions
  r\'eguli\`eres de l'espace}, Ann. Sci. \'Ecole Norm. Sup. \textbf{6} (1889),
  9--102.

\bibitem{Hall:eulerian}
Philip Hall, \emph{The {Eulerian} functions of a group}, Quart. J. Math.
  \textbf{7} (1936), 134--151.

\bibitem{JSV:approx}
Mark Jerrum, Alistair Sinclair, and Eric Vigoda, \emph{A polynomial-time
  approximation algorithm for the permanent of a matrix with nonnegative
  entries}, J. ACM \textbf{51} (2004), no.~4, 671--697.

\bibitem{KR:finite}
Hari Krovi and Alexander Russell, \emph{Quantum {Fourier} transforms and the
  complexity of link invariants for quantum doubles of finite groups}, Comm.
  Math. Phys. \textbf{334} (2015), no.~2, 743--777, {arXiv:1210.1550}.

\bibitem{K:zombies}
Greg Kuperberg and Eric Samperton, \emph{Computational complexity and
  3-manifolds and zombies}, {arXiv:1707.03811}, To appear in \emph{Geom.
  Topol.}

\bibitem{Munkres:assignment}
James Munkres, \emph{Algorithms for the assignment and transportation
  problems}, J. Soc. Indust. Appl. Math. \textbf{5} (1957), 32--38.

\bibitem{Reidemeister:knoten}
Kurt Reidemeister, \emph{Knoten und gruppen}, Abh. Math. Sem. Univ. Hamburg
  \textbf{5} (1927), no.~1, 7--23.

\bibitem{Ribet:adic}
Kenneth~A. Ribet, \emph{On {$l$}-adic representations attached to modular
  forms}, Invent. Math. \textbf{28} (1975), 245--275.

\bibitem{Ribet:division}
\bysame, \emph{Galois action on division points of {Abelian} varieties with
  real multiplications}, Amer. J. Math. \textbf{98} (1976), no.~3, 751--804.

\bibitem{RV:hurwitz}
David~P. Roberts and Akshay Venkatesh, \emph{Hurwitz monodromy and full number
  fields}, Algebra Number Theory \textbf{9} (2015), no.~3, 511--545,
  {arXiv:1401.7379}.

\bibitem{Samperton:schur}
Eric Samperton, \emph{Schur-type invariants of branched {$G$}-covers of
  surfaces}, 2017, {arXiv:1709.03182}.

\bibitem{VV:unique}
L.~G. Valiant and V.~V. Vazirani, \emph{{NP} is as easy as detecting unique
  solutions}, Theoret. Comput. Sci. \textbf{47} (1986), no.~1, 85--93.

\bibitem{Valiant:permanent}
Leslie~G. Valiant, \emph{The complexity of computing the permanent}, Theoret.
  Comput. Sci. \textbf{8} (1979), no.~2, 189--201.

\bibitem{W:zoo}
\emph{{The Complexity Zoo}}, \url{http://www.complexityzoo.com/}.

\end{thebibliography}

\providecommand{\bysame}{\leavevmode\hbox to3em{\hrulefill}\thinspace}
\providecommand{\href}[2]{#2}

\typeout{get arXiv to do 4 passes: Label(s) may have changed. Rerun}

\end{document}